\documentclass{amsart}
%%%%%%%%%%%%%%%%%%%%%%%%%%%%%%%%%%%%%%%%%%%%%%%%%%%%%%%%%%%%%%%%%%%%%%%%%%%%%%%%%%%%%%%%%%%%%%%%%%%%%%%%%%%%%%%%%%%%%%%%%%%%%%%%%%%%%%%%%%%%%%%%%%%%%%%%%%%%%%%%%%%%%%%%%%%%%%%%%%%%%%%%%%%%%%%%%%%%%%%%%%%%%%%%%%%%%%%%%%%%%%%%%%%%%%%%%%%%%%%%%%%%%%%%%%%%
\usepackage{amssymb}
\usepackage{amsmath}
\usepackage{amsfonts}

\setcounter{MaxMatrixCols}{10}
%TCIDATA{OutputFilter=LATEX.DLL}
%TCIDATA{Version=5.50.0.2960}
%TCIDATA{<META NAME="SaveForMode" CONTENT="1">}
%TCIDATA{BibliographyScheme=Manual}
%TCIDATA{Created=Sunday, June 10, 2012 14:50:36}
%TCIDATA{LastRevised=Thursday, April 03, 2014 13:27:52}
%TCIDATA{<META NAME="GraphicsSave" CONTENT="32">}
%TCIDATA{<META NAME="DocumentShell" CONTENT="Articles\SW\AMS Journal Article">}
%TCIDATA{CSTFile=amsartci.cst}

\newtheorem{theorem}{Theorem}
\theoremstyle{plain}

\newtheorem{corollary}{Corollary}

\newtheorem{definition}{Definition}

\newtheorem{lemma}{Lemma}

\newtheorem{proposition}{Proposition}
\newtheorem{remark}{Remark}

\numberwithin{equation}{section}
\input{tcilatex}

\begin{document}
\title[Integral inequalities]{On new general integral inequalities for $s$%
-convex functions}
\author{\.{I}mdat \.{I}\c{s}can$^{\blacktriangledown }$}
\address{$^{\blacktriangledown }$Department of Mathematics, Faculty of Arts
and Sciences, Giresun University, 28100, Giresun, Turkey.}
\email{imdat.iscan@giresun.edu.tr, imdati@yahoo.com}
\author{Erhan SET$^{\clubsuit }$}
\address{$^{\clubsuit }$Department of Mathematics, Faculty of Arts and
Sciences, Ordu University, 52200, Ordu, Turkey}
\email{erhanset@yahoo.com}
\author{M. Emin \"{O}zdemir$^{\blacksquare }$}
\address{$^{\blacksquare }$Atat\"{u}rk University, K.K. Education Faculty,
Department of Mathematics, 25240, Campus, Erzurum, Turkey}
\email{emos@atauni.edu.tr}
\date{}
\subjclass[2000]{26A51, 26D15}
\keywords{convex function, $s$-convex function, Simpson's inequality,
Hermite-Hadamard's inequality.}

\begin{abstract}
In this paper, the authors establish some new estimates for the remainder
term of the midpoint, trapezoid, and Simpson formula using functions whose
derivatives in absolute value at certain power are $s$-convex. Some
applications to special means of real numbers are provided as well.
\end{abstract}

\maketitle

\section{Introduction}

Let $f:I\subseteq \mathbb{R\rightarrow R}$ be a convex function defined on
the interval $I$ of real numbers and $a,b\in I$ with $a<b$. The following
inequality%
\begin{equation}
f\left( \frac{a+b}{2}\right) \leq \frac{1}{b-a}\dint\limits_{a}^{b}f(x)dx%
\leq \frac{f(a)+f(b)}{2}  \label{1-1}
\end{equation}%
holds. This double inequality is known in the literature as Hermite-Hadamard
integral inequality for convex functions. See (\cite{ADK11},\cite{DF99},\cite%
{I12}-\cite{P10},\cite{SSOH}) for the results of the generalization,
improvement and extention of the famous integral inequality (\ref{1-1}).

In 1978, Breckner introduced $s$-convex functions as a generalization of
convex functions as follows \cite{B78}:

\begin{definition}
Let $s\in (0,1]$ be a fixed real number. A function $f:[0,\infty
)\rightarrow \lbrack 0,\infty )$ is said to be $s-$convex (in the second
sense),or that $f$ belongs to the class $K_{s}^{2}$, if \ 
\begin{equation*}
f\left( \alpha x+(1-\alpha )y\right) \leq \alpha ^{s}f(x)+(1-\alpha )^{s}f(y)
\end{equation*}%
for all $x,y\in \lbrack 0,\infty )$ and $\alpha \in \lbrack 0,1]$.
\end{definition}

Of course, $s$-convexity means just convexity when $s=1$. For other recent
results concerning s-convex functions see \cite{ADD09}-\cite{set4}.

The following inequality is well known in the literature as Simpson's
inequality:

Let $f:\left[ a,b\right] \mathbb{\rightarrow R}$ be a four times
continuously differentiable mapping on $\left( a,b\right) $ and $\left\Vert
f^{(4)}\right\Vert _{\infty }=\underset{x\in \left( a,b\right) }{\sup }%
\left\vert f^{(4)}(x)\right\vert <\infty .$ Then the following inequality
holds:%
\begin{equation*}
\left\vert \frac{1}{3}\left[ \frac{f(a)+f(b)}{2}+2f\left( \frac{a+b}{2}%
\right) \right] -\frac{1}{b-a}\dint\limits_{a}^{b}f(x)dx\right\vert \leq 
\frac{1}{2880}\left\Vert f^{(4)}\right\Vert _{\infty }\left( b-a\right) ^{4}.
\end{equation*}%
\ \qquad In recent years many authors have studied error estimations for
Simpson's inequality. For refinements, counterparts, generalizations of the
Simpson's inequality and new Simpson's type inequalities, see \cite%
{ADD09,I12,I13b,I13d,SA11,SSO10,set4}.

In \cite{DF99}, Dragomir and Fitzpatrick proved a variant of
Hermite--Hadamard inequality which holds for the $s$-convex functions.

\begin{theorem}
Suppose that $f:\left[ 0,\infty \right) \rightarrow \left[ 0,\infty \right) $
is an s-convex function in the second sense, where $s\in (0,1]$ and let $%
a,b\in \left[ 0,\infty \right) $, $a<b$. If $f\in L\left[ a,b\right] $, then
the following inequalities hold%
\begin{equation}
2^{s-1}f\left( \frac{a+b}{2}\right) \leq \frac{1}{b-a}\dint%
\limits_{a}^{b}f(x)dx\leq \frac{f(a)+f(b)}{s+1}  \label{1-2}
\end{equation}%
the constant $k=\frac{1}{s+1}$ is the best possible in the second inequality
in (\ref{1-2}). The above inequalities are sharp.
\end{theorem}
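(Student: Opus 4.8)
The plan is to establish the two inequalities in (\ref{1-2}) independently, each by feeding a suitable convex combination into the defining inequality of $s$-convexity and then integrating. The right-hand estimate is the more direct of the two, while the left-hand estimate requires a symmetrization trick; in particular, the factor $2^{s-1}$ (in place of the classical $1$) will be forced on us by the exponent $s$ appearing in the definition of $K_s^2$.

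For the second inequality I would introduce the parametrization $x=ta+(1-t)b$ with $t\in[0,1]$, so that $dx=-(b-a)\,dt$ and
\[
\frac{1}{b-a}\int_a^b f(x)\,dx=\int_0^1 f\bigl(ta+(1-t)b\bigr)\,dt .
\]
Applying the definition of $s$-convexity with $\alpha=t$ gives the pointwise bound $f(ta+(1-t)b)\le t^s f(a)+(1-t)^s f(b)$, and integrating this over $[0,1]$ reduces everything to the elementary integrals $\int_0^1 t^s\,dt=\int_0^1(1-t)^s\,dt=\frac{1}{s+1}$. This immediately yields the upper bound $\frac{f(a)+f(b)}{s+1}$.

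For the first inequality the idea is to exploit the symmetry of the interval about its midpoint. Writing $\frac{a+b}{2}=\frac12 x+\frac12(a+b-x)$ for each $x\in[a,b]$ and applying $s$-convexity with $\alpha=\frac12$ produces the pointwise bound
\[
f\!\left(\frac{a+b}{2}\right)\le 2^{-s}\bigl[f(x)+f(a+b-x)\bigr].
\]
Integrating in $x$ over $[a,b]$ and using the reflection substitution $u=a+b-x$, which shows $\int_a^b f(a+b-x)\,dx=\int_a^b f(u)\,du$, turns the right-hand side into $2^{-s}\cdot 2\int_a^b f$. After dividing by $b-a$ and rearranging, the constant $2^{1-s}$ reappears as $2^{s-1}$ on the left, giving exactly the stated lower bound.

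Finally, to see that $k=\frac{1}{s+1}$ is best possible I would test the candidate $f(x)=x^s$ on $[a,b]=[0,1]$. This function lies in $K_s^2$ because the elementary inequality $(u+v)^s\le u^s+v^s$ (valid for $u,v\ge0$ and $0<s\le1$) gives $(\alpha x+(1-\alpha)y)^s\le\alpha^s x^s+(1-\alpha)^s y^s$. For this choice one computes $\frac{1}{b-a}\int_0^1 x^s\,dx=\frac{1}{s+1}$, while $\frac{f(0)+f(1)}{s+1}=\frac{1}{s+1}$, so the second inequality holds with equality and no smaller constant can be inserted. The part I expect to be most delicate is the left inequality: one must resist applying $s$-convexity with a generic $\alpha$ and instead recognize that only the symmetric split $\alpha=\frac12$, combined with averaging $f$ against its reflection, produces the correct constant $2^{s-1}$.
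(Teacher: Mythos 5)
Your proof is correct, and it follows essentially the same route as the classical Dragomir--Fitzpatrick argument for this theorem, which the paper does not reprove but simply quotes from \cite{DF99}: the right-hand inequality by integrating the $s$-convexity bound along $ta+(1-t)b$, the left-hand one by the symmetric split $\frac{a+b}{2}=\frac{1}{2}x+\frac{1}{2}(a+b-x)$ with $\alpha=\frac{1}{2}$ plus the reflection substitution, and the optimality of $\frac{1}{s+1}$ via the equality case $f(x)=x^{s}$ on $[0,1]$. The only point you leave untouched is the closing claim that the first inequality is also ``sharp,'' but that is a one-line addendum (equality holds trivially for $f\equiv 0$, and for affine $f$ when $s=1$), so it does not affect the substance of your argument.
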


In \cite{I12}, Iscan obtained\ a new generalization of some integral
inequalities for differentiable convex mapping which are connected Simpson
and Hadamard type inequalities, and he used the following lemma to prove
this.

\begin{lemma}
\label{2.1}Let $f:I\subseteq \mathbb{R\rightarrow R}$ be a differentiable
mapping on $I^{\circ }$ such that $f^{\prime }\in L[a,b]$, where $a,b\in I$
with $a<b$ and $\alpha ,\lambda \in \left[ 0,1\right] $. Then the following
equality holds:%
\begin{eqnarray*}
&&\lambda \left( \alpha f(a)+\left( 1-\alpha \right) f(b)\right) +\left(
1-\lambda \right) f(\alpha a+\left( 1-\alpha \right) b)-\frac{1}{b-a}%
\dint\limits_{a}^{b}f(x)dx \\
&=&\left( b-a\right) \left[ \dint\limits_{0}^{1-\alpha }\left( t-\alpha
\lambda \right) f^{\prime }\left( tb+(1-t)a\right) dt\right. \\
&&\left. +\dint\limits_{1-\alpha }^{1}\left( t-1+\lambda \left( 1-\alpha
\right) \right) f^{\prime }\left( tb+(1-t)a\right) dt\right] .
\end{eqnarray*}
\end{lemma}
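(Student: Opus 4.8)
The plan is to establish the identity by direct computation of the two integrals on the right-hand side, integrating each by parts, and then showing that the boundary terms reproduce the left-hand side while the leftover integral terms collapse to the average $\frac{1}{b-a}\int_a^b f(x)\,dx$. The key structural observation is that the substitution $x = tb + (1-t)a$ gives $dx = (b-a)\,dt$, so that $\int_0^1 f'(tb+(1-t)a)\,dt = \frac{f(b)-f(a)}{b-a}$ and, more usefully, $\int_0^1 f(tb+(1-t)a)\,dt = \frac{1}{b-a}\int_a^b f(x)\,dx$. This is the bridge that will convert the integral-of-$f$ pieces arising from integration by parts into the normalized average appearing in the claim.

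**First I would** handle the first integral $\int_0^{1-\alpha}(t-\alpha\lambda)f'(tb+(1-t)a)\,dt$. Integrating by parts with $u = t - \alpha\lambda$ and $dv = f'(tb+(1-t)a)\,dt$, so that $v = \frac{1}{b-a}f(tb+(1-t)a)$, produces a boundary term evaluated at $t=1-\alpha$ and $t=0$ plus a correction $-\frac{1}{b-a}\int_0^{1-\alpha} f(tb+(1-t)a)\,dt$. I would then do the same for the second integral $\int_{1-\alpha}^{1}(t-1+\lambda(1-\alpha))f'(tb+(1-t)a)\,dt$, using $u = t - 1 + \lambda(1-\alpha)$, which yields a boundary term at $t=1$ and $t=1-\alpha$ plus $-\frac{1}{b-a}\int_{1-\alpha}^{1} f(tb+(1-t)a)\,dt$. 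Adding the two correction integrals reconstitutes $-\frac{1}{b-a}\int_0^1 f(tb+(1-t)a)\,dt = -\frac{1}{(b-a)^2}\int_a^b f(x)\,dx$, which after multiplying by the outer factor $(b-a)$ delivers exactly the $-\frac{1}{b-a}\int_a^b f(x)\,dx$ term on the left.

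**Next I would** collect and simplify the boundary terms. At the interior point $t = 1-\alpha$ the two boundary contributions share the common value $f((1-\alpha)b + \alpha a) = f(\alpha a + (1-\alpha)b)$; the coefficients are $(1-\alpha) - \alpha\lambda$ from the first integral evaluated at its upper limit and $-\big((1-\alpha) - 1 + \lambda(1-\alpha)\big) = \alpha - \lambda(1-\alpha)$ from the second evaluated at its lower limit (with sign from the boundary convention), and these combine to give the coefficient $(1-\lambda)$ in front of $f(\alpha a + (1-\alpha)b)$. At $t=0$ the term is $-(-\alpha\lambda)f(a) = \alpha\lambda f(a)$, and at $t=1$ it is $(\lambda(1-\alpha))f(b) = \lambda(1-\alpha)f(b)$, which together assemble the piece $\lambda(\alpha f(a) + (1-\alpha)f(b))$. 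The main obstacle I anticipate is purely bookkeeping: keeping the signs and the two evaluations at the shared endpoint $t=1-\alpha$ straight so that the coefficients telescope correctly to $1-\lambda$, since a sign slip there is the easiest way to miss the stated identity. Once the boundary terms are verified to match, multiplying everything by $(b-a)$ and comparing with the integral correction term completes the proof, and I would finish by remarking that the special cases (various choices of $\alpha$ and $\lambda$) recover the known midpoint, trapezoid, and Simpson identities.
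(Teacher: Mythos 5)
Your proof is correct: the integration by parts of each integral with $v=\frac{1}{b-a}f(tb+(1-t)a)$, the collection of boundary terms (which indeed telescope to the coefficient $1-\lambda$ at $t=1-\alpha$), and the reassembly of the two correction integrals into $-\frac{1}{(b-a)^2}\int_a^b f(x)\,dx$ all check out. The paper itself states this lemma without proof, citing \cite{I12}, and the argument given there is exactly this standard integration-by-parts computation, so your proposal matches the intended proof.
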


The main inequality in \cite{I12}, pointed out, is as follows.

\begin{theorem}
Let $f:I\subseteq \mathbb{R\rightarrow R}$ be a differentiable mapping on $%
I^{\circ }$ such that $f^{\prime }\in L[a,b]$, where $a,b\in I^{\circ }$
with $a<b$ and $\alpha ,\lambda \in \left[ 0,1\right] $. If $\left\vert
f^{\prime }\right\vert ^{q}$ is convex on $[a,b]$, $q\geq 1,$ then the
following inequality holds:%
\begin{equation*}
\left\vert \lambda \left( \alpha f(a)+\left( 1-\alpha \right) f(b)\right)
+\left( 1-\lambda \right) f(\alpha a+\left( 1-\alpha \right) b)-\frac{1}{b-a}%
\dint\limits_{a}^{b}f(x)dx\right\vert
\end{equation*}%
\begin{eqnarray}
&&  \label{1-3} \\
&\leq &\left\{ 
\begin{array}{cc}
\begin{array}{c}
\left( b-a\right) \left\{ \gamma _{2}^{1-\frac{1}{q}}\left( \mu
_{1}\left\vert f^{\prime }(b)\right\vert ^{q}+\mu _{2}\left\vert f^{\prime
}(a)\right\vert ^{q}\right) ^{\frac{1}{q}}\right. \\ 
\left. +\upsilon _{2}^{1-\frac{1}{q}}\left( \eta _{3}\left\vert f^{\prime
}(b)\right\vert ^{q}+\eta _{4}\left\vert f^{\prime }(a)\right\vert
^{q}\right) ^{\frac{1}{q}}\right\} ,%
\end{array}
& \alpha \lambda \leq 1-\alpha \leq 1-\lambda \left( 1-\alpha \right) \\ 
\begin{array}{c}
\left( b-a\right) \left\{ \gamma _{2}^{1-\frac{1}{q}}\left( \mu
_{1}\left\vert f^{\prime }(b)\right\vert ^{q}+\mu _{2}\left\vert f^{\prime
}(a)\right\vert ^{q}\right) ^{\frac{1}{q}}\right. \\ 
\left. +\upsilon _{1}^{1-\frac{1}{q}}\left( \eta _{1}\left\vert f^{\prime
}(b)\right\vert ^{q}+\eta _{2}\left\vert f^{\prime }(a)\right\vert
^{q}\right) ^{\frac{1}{q}}\right\} ,%
\end{array}
& \alpha \lambda \leq 1-\lambda \left( 1-\alpha \right) \leq 1-\alpha \\ 
\begin{array}{c}
\left( b-a\right) \left\{ \gamma _{1}^{1-\frac{1}{q}}\left( \mu
_{3}\left\vert f^{\prime }(b)\right\vert ^{q}+\mu _{4}\left\vert f^{\prime
}(a)\right\vert ^{q}\right) ^{\frac{1}{q}}\right. \\ 
\left. +\upsilon _{2}^{1-\frac{1}{q}}\left( \eta _{3}\left\vert f^{\prime
}(b)\right\vert ^{q}+\eta _{4}\left\vert f^{\prime }(a)\right\vert
^{q}\right) ^{\frac{1}{q}}\right\} ,%
\end{array}
& 1-\alpha \leq \alpha \lambda \leq 1-\lambda \left( 1-\alpha \right)%
\end{array}%
\right.  \notag
\end{eqnarray}%
where 
\begin{equation*}
\gamma _{1}=\left( 1-\alpha \right) \left[ \alpha \lambda -\frac{\left(
1-\alpha \right) }{2}\right] ,\ \gamma _{2}=\left( \alpha \lambda \right)
^{2}-\gamma _{1}\ ,
\end{equation*}%
\begin{eqnarray*}
\upsilon _{1} &=&\frac{1-\left( 1-\alpha \right) ^{2}}{2}-\alpha \left[
1-\lambda \left( 1-\alpha \right) \right] , \\
\upsilon _{2} &=&\frac{1+\left( 1-\alpha \right) ^{2}}{2}-\left( \lambda
+1\right) \left( 1-\alpha \right) \left[ 1-\lambda \left( 1-\alpha \right) %
\right] ,
\end{eqnarray*}%
\begin{eqnarray*}
\mu _{1} &=&\frac{\left( \alpha \lambda \right) ^{3}+\left( 1-\alpha \right)
^{3}}{3}-\alpha \lambda \frac{\left( 1-\alpha \right) ^{2}}{2},\  \\
\mu _{2} &=&\frac{1+\alpha ^{3}+\left( 1-\alpha \lambda \right) ^{3}}{3}-%
\frac{\left( 1-\alpha \lambda \right) }{2}\left( 1+\alpha ^{2}\right) , \\
\mu _{3} &=&\alpha \lambda \frac{\left( 1-\alpha \right) ^{2}}{2}-\frac{%
\left( 1-\alpha \right) ^{3}}{3}, \\
\mu _{4} &=&\frac{\left( \alpha \lambda -1\right) \left( 1-\alpha
^{2}\right) }{2}+\frac{1-\alpha ^{3}}{3},
\end{eqnarray*}%
\begin{eqnarray*}
\eta _{1} &=&\frac{1-\left( 1-\alpha \right) ^{3}}{3}-\frac{\left[ 1-\lambda
\left( 1-\alpha \right) \right] }{2}\alpha \left( 2-\alpha \right) ,\  \\
\eta _{2} &=&\frac{\lambda \left( 1-\alpha \right) \alpha ^{2}}{2}-\frac{%
\alpha ^{3}}{3},
\end{eqnarray*}%
\begin{eqnarray*}
\eta _{3} &=&\frac{\left[ 1-\lambda \left( 1-\alpha \right) \right] ^{3}}{3}-%
\frac{\left[ 1-\lambda \left( 1-\alpha \right) \right] }{2}\left( 1+\left(
1-\alpha \right) ^{2}\right) +\frac{1+\left( 1-\alpha \right) ^{3}}{3}, \\
\eta _{4} &=&\frac{\left[ \lambda \left( 1-\alpha \right) \right] ^{3}}{3}-%
\frac{\lambda \left( 1-\alpha \right) \alpha ^{2}}{2}+\frac{\alpha ^{3}}{3}.
\end{eqnarray*}
\end{theorem}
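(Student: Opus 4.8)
The plan is to start from the exact identity in Lemma \ref{2.1}, pass to absolute values, and bound the two resulting integrals by the power-mean inequality together with the convexity hypothesis on $|f'|^q$. Writing $K_1(t)=t-\alpha\lambda$ on $[0,1-\alpha]$ and $K_2(t)=t-1+\lambda(1-\alpha)$ on $[1-\alpha,1]$ for the two kernels appearing in the lemma, the triangle inequality applied to the identity gives
\[
\left| \lambda\bigl(\alpha f(a)+(1-\alpha)f(b)\bigr)+(1-\lambda)f(\alpha a+(1-\alpha)b)-\frac{1}{b-a}\int_a^b f(x)\,dx\right|
\]
\[
\le (b-a)\left[ \int_0^{1-\alpha}\left|K_1(t)\right|\left|f'(tb+(1-t)a)\right|dt+\int_{1-\alpha}^1\left|K_2(t)\right|\left|f'(tb+(1-t)a)\right|dt\right].
\]
The goal is then to estimate each of these two integrals separately and to recognize the emerging constants as exactly the $\gamma,\upsilon,\mu,\eta$ of the statement.

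For each integral I would apply the power-mean inequality $\int g\,h\,dt\le\bigl(\int g\,dt\bigr)^{1-\frac1q}\bigl(\int g\,h^q\,dt\bigr)^{\frac1q}$ with $g=|K_i(t)|$ and $h=|f'(tb+(1-t)a)|$. This produces the leading factors $\bigl(\int_0^{1-\alpha}|K_1|\,dt\bigr)^{1-\frac1q}$ and $\bigl(\int_{1-\alpha}^1|K_2|\,dt\bigr)^{1-\frac1q}$, which after evaluation become the constants $\gamma$ and $\upsilon$. To treat the weighted factors $\int|K_i|\,h^q\,dt$, I would invoke the hypothesis that $|f'|^q$ is convex, so that $|f'(tb+(1-t)a)|^q\le t|f'(b)|^q+(1-t)|f'(a)|^q$; inserting this bound reduces everything to the elementary moments $\int|K_i(t)|\,t\,dt$ and $\int|K_i(t)|(1-t)\,dt$, and these yield the coefficients $\mu_1,\dots,\mu_4$ of $|f'(b)|^q,|f'(a)|^q$ in the first integral and $\eta_1,\dots,\eta_4$ in the second.

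The entire estimate therefore reduces to evaluating $\int|K_i(t)|\,dt$, $\int|K_i(t)|\,t\,dt$ and $\int|K_i(t)|(1-t)\,dt$, and here lies the main obstacle and the origin of the three displayed cases: $|K_1|$ and $|K_2|$ are only piecewise linear, with $K_1$ changing sign at $t=\alpha\lambda$ and $K_2$ at $t=1-\lambda(1-\alpha)$. Whether each root falls inside its interval of integration is governed by the ordering of the three quantities $\alpha\lambda$, $1-\alpha$ and $1-\lambda(1-\alpha)$, and the three admissible orderings are precisely the hypotheses of the three cases. When a root lies inside one splits the integral there (this is what produces the $\gamma_2$ and $\upsilon_2$ forms), whereas when it lies outside the kernel keeps a fixed sign and no splitting occurs (giving $\gamma_1$ and $\upsilon_1$); tracking which breakpoint belongs to which subinterval in each case is the only genuinely delicate bookkeeping, while the polynomial integration that follows is routine. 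Assembling the two power-mean bounds, substituting the evaluated constants, and factoring out $(b-a)$ then reproduces inequality (\ref{1-3}) in each of the three cases, completing the proof.
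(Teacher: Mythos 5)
Your proposal is correct and takes essentially the same route as the paper: the paper obtains this inequality as the $s=1$ case of its Theorem \ref{2.2}, whose proof is precisely your argument — the identity of Lemma \ref{2.1}, the power-mean inequality applied with the kernels as weights, the convexity bound $|f'(tb+(1-t)a)|^{q}\leq t|f'(b)|^{q}+(1-t)|f'(a)|^{q}$, and the evaluation of the kernel moments split according to whether the sign changes at $\alpha\lambda$ and $1-\lambda(1-\alpha)$ fall inside the intervals of integration. Your identification of the three cases with the inside/outside position of these two roots (inside giving $\gamma_{2},\upsilon_{2}$, outside giving $\gamma_{1},\upsilon_{1}$) matches the paper's computation exactly.
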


In \cite{ADK11} Alomari et al. obtained the following inequalities of the
left-hand side of Hermite-Hadamard's inequality for $s$-convex mappings.

\begin{theorem}
Let $f:I\subseteq \lbrack 0,\infty )\rightarrow \mathbb{R}$ be a
differentiable mapping on $I^{\circ }$, such that $f^{\prime }\in L[a,b]$,
where $a,b\in I$ with $a<b$. If $|f^{\prime }|^{q},\ q\geq 1,$ is $s$-convex
on $[a,b]$, for some fixed $s\in (0,1]$, then the following inequality holds:%
\begin{eqnarray}
&&\left\vert f\left( \frac{a+b}{2}\right) -\frac{1}{b-a}\dint%
\limits_{a}^{b}f(x)dx\right\vert  \notag \\
&\leq &\frac{b-a}{8}\left( \frac{2}{(s+1)(s+2)}\right) ^{\frac{1}{q}}\left[
\left\{ \left( 2^{1-s}+1\right) \left\vert f^{\prime }(b)\right\vert
^{q}+2^{1-s}\left\vert f^{\prime }(a)\right\vert ^{q}\right\} ^{\frac{1}{q}%
}\right.  \notag \\
&&\left. +\left\{ \left( 2^{1-s}+1\right) \left\vert f^{\prime
}(a)\right\vert ^{q}+2^{1-s}\left\vert f^{\prime }(b)\right\vert
^{q}\right\} ^{\frac{1}{q}}\right] .  \label{1-4}
\end{eqnarray}
\end{theorem}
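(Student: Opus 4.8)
The plan is to reduce everything to the integral identity already recorded in Lemma \ref{2.1} and then estimate. First I would specialize that lemma to the parameter values $\alpha=\tfrac12$ and $\lambda=0$. With these choices the two weighted endpoint terms disappear and the left-hand side collapses to $f\big(\tfrac{a+b}{2}\big)-\tfrac{1}{b-a}\int_a^b f$, so the identity becomes
\[ f\Big(\tfrac{a+b}{2}\Big)-\frac{1}{b-a}\int_a^b f(x)\,dx = (b-a)\left[\int_0^{1/2} t\,f'(tb+(1-t)a)\,dt + \int_{1/2}^1 (t-1)f'(tb+(1-t)a)\,dt\right]. \]
Taking absolute values and applying the triangle inequality (noting $|t-1|=1-t$ on $[\tfrac12,1]$) bounds the quantity of interest by $(b-a)$ times the sum of $\int_0^{1/2} t\,|f'(tb+(1-t)a)|\,dt$ and $\int_{1/2}^1 (1-t)\,|f'(tb+(1-t)a)|\,dt$.

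Next I would apply the power-mean inequality to each of these two integrals separately, treating $t$ and $1-t$ as the weights and using $q\ge 1$. Since $\int_0^{1/2} t\,dt=\int_{1/2}^1 (1-t)\,dt=\tfrac18$, each term acquires the factor $(\tfrac18)^{1-1/q}=\tfrac18\,8^{1/q}$, and together with the $(b-a)$ this is precisely the source of the prefactor $\tfrac{b-a}{8}$; the extra $8^{1/q}$ will be absorbed into the inner $q$-th roots. I would then invoke the $s$-convexity of $|f'|^q$ in the form $|f'(tb+(1-t)a)|^q\le t^s|f'(b)|^q+(1-t)^s|f'(a)|^q$, which splits each inner integral into an $|f'(b)|^q$-part and an $|f'(a)|^q$-part. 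The two resulting expressions are mirror images of each other under $t\mapsto 1-t$ together with $a\leftrightarrow b$, and this symmetry is exactly what produces the symmetric pair $\{(2^{1-s}+1)|f'(b)|^q+2^{1-s}|f'(a)|^q\}$ and $\{(2^{1-s}+1)|f'(a)|^q+2^{1-s}|f'(b)|^q\}$ appearing in the statement.

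What remains is to evaluate the four elementary integrals created by the $s$-convex splitting. Two of them, $\int_0^{1/2} t^{s+1}\,dt=\tfrac{1}{2^{s+2}(s+2)}$ and its mirror $\int_{1/2}^1 (1-t)^{s+1}\,dt$, are immediate. The delicate ones, which I expect to be the main obstacle, are the mixed integrals $\int_0^{1/2} t(1-t)^s\,dt$ and $\int_{1/2}^1 (1-t)t^s\,dt$; I would handle them by integration by parts (the substitution $u=1-t$ reduces the second to the first plus an incomplete beta integral), obtaining closed forms expressed through $2^{-s}$ and $(s+1)(s+2)$. Factoring the common denominator out of the $q$-th root then isolates the constant $\big(\tfrac{2}{(s+1)(s+2)}\big)^{1/q}$, while the residual coefficients must be compared with $2^{1-s}+1$ and $2^{1-s}$. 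Here one sees that the exact values of the mixed integrals are in fact slightly smaller than these rounded coefficients, so a final elementary estimate — valid precisely because $0<s\le 1$ — is needed to pass to the stated constants. Collecting the two symmetric terms then yields inequality (\ref{1-4}).
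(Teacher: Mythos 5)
Your route is in substance the paper's own: the paper proves the general Theorem \ref{2.2} from Lemma \ref{2.1} by exactly the steps you outline (power mean inequality keeping the weight in both factors, then $s$-convexity of $|f'|^{q}$, then explicit evaluation of the weighted integrals), specializes to $\alpha =\frac{1}{2}$, $\lambda =0$ to obtain the sharper midpoint bound (\ref{2-10}), and then passes from (\ref{2-10}) to (\ref{1-4}) by two elementary coefficient inequalities. Your computation of the exact constants is also correct: after extracting $\left( \frac{2}{(s+1)(s+2)}\right) ^{1/q}$, the bracket produced by $\int_{0}^{1/2}$ is
\[
\frac{2^{1-s}(s+1)}{2}\left\vert f^{\prime }(b)\right\vert ^{q}+\frac{2^{1-s}\left( 2^{s+2}-s-3\right) }{2}\left\vert f^{\prime }(a)\right\vert ^{q},
\]
and the bracket produced by $\int_{1/2}^{1}$ is its mirror image under $a\leftrightarrow b$.

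The gap is in your final step. You claim the exact coefficients are ``slightly smaller'' than the stated ones, but position-by-position this is false: the mixed-integral coefficient $\frac{2^{1-s}(2^{s+2}-s-3)}{2}$ multiplies $\left\vert f^{\prime }(a)\right\vert ^{q}$ in the bracket displayed above, whereas in the first bracket of (\ref{1-4}) the coefficient of $\left\vert f^{\prime }(a)\right\vert ^{q}$ is $2^{1-s}$; the comparison $\frac{2^{1-s}(2^{s+2}-s-3)}{2}\leq 2^{1-s}$ is equivalent to $4\cdot 2^{s}\leq s+5$, which fails for $s$ roughly above $0.45$ (at $s=1$ the left side equals $2$ while $2^{1-s}=1$). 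The estimate can only be closed by cross-pairing the brackets, which is legitimate because the final bound is a sum of two expressions symmetric under $a\leftrightarrow b$: compare the $\int_{0}^{1/2}$-bracket with the bracket of (\ref{1-4}) in which $2^{1-s}+1$ multiplies $\left\vert f^{\prime }(a)\right\vert ^{q}$, and vice versa. The inequalities actually needed are then $\frac{2^{1-s}(s+1)}{2}\leq 2^{1-s}$, i.e. $\frac{s+1}{2}\leq 1$, and $\frac{2^{1-s}(2^{s+2}-s-3)}{2}\leq 2^{1-s}+1$, which reduces to $3\cdot 2^{s}\leq s+5$ and holds on $(0,1]$ with equality at $s=1$; these are precisely the two inequalities the paper records in the remark where (\ref{2-10}) is compared with (\ref{1-4}). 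Without this bracket swap your term-by-term comparison cannot be completed, so it must be stated explicitly; the rest of your argument is fine (incidentally, the mixed integrals need only the substitution $u=1-t$ and termwise integration, not integration by parts).
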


\begin{theorem}
Let $f:I\subseteq \lbrack 0,\infty )\rightarrow \mathbb{R}$ be a
differentiable mapping on $I^{\circ }$, such that $f^{\prime }\in L[a,b]$,
where $a,b\in I$ with $a<b$. If $|f^{\prime }|^{\frac{p}{p-1}},\ p>1,$ is $s$%
-convex on $[a,b]$, for some fixed $s\in (0,1]$, then the following
inequality holds:%
\begin{eqnarray}
\left\vert f\left( \frac{a+b}{2}\right) -\frac{1}{b-a}\dint%
\limits_{a}^{b}f(x)dx\right\vert &\leq &\left( \frac{b-a}{4}\right) \left( 
\frac{1}{p+1}\right) ^{\frac{1}{p}}\left( \frac{1}{s+1}\right) ^{\frac{2}{q}}
\notag \\
&&\times \left[ \left( \left( 2^{1-s}+s+1\right) \left\vert f^{\prime
}\left( a\right) \right\vert ^{q}+2^{1-s}\left\vert f^{\prime }\left(
b\right) \right\vert ^{q}\right) ^{\frac{1}{q}}\right.  \notag \\
&&+\left. \left( \left( 2^{1-s}+s+1\right) \left\vert f^{\prime }\left(
b\right) \right\vert ^{q}+2^{1-s}\left\vert f^{\prime }\left( a\right)
\right\vert ^{q}\right) ^{\frac{1}{q}}\right] ,  \label{1-4a}
\end{eqnarray}%
where $p$ is the conjugate of $q$, $q=p/(p-1).$
\end{theorem}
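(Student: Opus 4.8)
The plan is to separate the two features of the statement — the exponent $p$, which I would handle by H\"older's inequality, and the $s$-convexity, which I would handle on the derivative — and to feed them into the standard midpoint identity. First I would record that identity in its symmetric form,
\[
f\left(\frac{a+b}{2}\right)-\frac{1}{b-a}\int_a^b f(x)\,dx
=\frac{b-a}{4}\int_0^1 (t-1)\left[f'\!\left(\tfrac{1-t}{2}a+\tfrac{1+t}{2}b\right)-f'\!\left(\tfrac{1+t}{2}a+\tfrac{1-t}{2}b\right)\right]dt,
\]
which one obtains by integrating by parts over $[a,\frac{a+b}{2}]$ and $[\frac{a+b}{2},b]$ separately (equivalently from Lemma \ref{2.1} with $\alpha=\tfrac12$, $\lambda=0$ after a change of variable). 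Taking absolute values, using $|t-1|=1-t$ on $[0,1]$ and the triangle inequality, I get
\[
\left|f\left(\tfrac{a+b}{2}\right)-\tfrac{1}{b-a}\int_a^b f(x)\,dx\right|
\le \frac{b-a}{4}\int_0^1 (1-t)\left[\left|f'\!\left(\tfrac{1+t}{2}a+\tfrac{1-t}{2}b\right)\right|+\left|f'\!\left(\tfrac{1-t}{2}a+\tfrac{1+t}{2}b\right)\right|\right]dt.
\]

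The two summands are interchanged by $a\leftrightarrow b$, which is precisely what produces the two bracketed terms of \eqref{1-4a}; hence it suffices to estimate the one with argument $\tfrac{1+t}{2}a+\tfrac{1-t}{2}b$. To this I would apply H\"older's inequality with exponents $p$ and $q$, putting the weight $(1-t)$ entirely into the $p$-factor:
\[
\int_0^1 (1-t)\left|f'\!\left(\tfrac{1+t}{2}a+\tfrac{1-t}{2}b\right)\right|dt
\le\left(\int_0^1 (1-t)^p\,dt\right)^{1/p}\left(\int_0^1 \left|f'\!\left(\tfrac{1+t}{2}a+\tfrac{1-t}{2}b\right)\right|^q dt\right)^{1/q}.
\]
Since $\int_0^1(1-t)^p\,dt=\tfrac{1}{p+1}$, this already isolates the factor $\left(\tfrac{1}{p+1}\right)^{1/p}$.

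It remains to bound the $q$-th-power integral by the $s$-convexity of $|f'|^q$. The useful observation is that $\tfrac{1+t}{2}a+\tfrac{1-t}{2}b=t\,a+(1-t)\tfrac{a+b}{2}$, so $s$-convexity at the nodes $a$ and $\tfrac{a+b}{2}$ gives $|f'(\cdot)|^q\le t^s|f'(a)|^q+(1-t)^s|f'(\tfrac{a+b}{2})|^q$, and a second application yields $|f'(\tfrac{a+b}{2})|^q\le 2^{-s}(|f'(a)|^q+|f'(b)|^q)$ — this is where the powers $2^{-s}$, i.e. $2^{1-s}$, enter. Integrating with $\int_0^1 t^s\,dt=\int_0^1(1-t)^s\,dt=\tfrac{1}{s+1}$ gives a bound $\tfrac{1}{s+1}\big[(1+2^{-s})|f'(a)|^q+2^{-s}|f'(b)|^q\big]$, and the elementary estimate $2^{-s}\le \tfrac{2^{1-s}}{s+1}$ (valid since $s\le1$) turns this into
\[
\int_0^1\left|f'\!\left(\tfrac{1+t}{2}a+\tfrac{1-t}{2}b\right)\right|^q dt
\le\frac{1}{(s+1)^2}\left[(2^{1-s}+s+1)|f'(a)|^q+2^{1-s}|f'(b)|^q\right].
\]
Taking $q$-th roots exposes the factor $\left(\tfrac{1}{s+1}\right)^{2/q}$ and the first bracket of \eqref{1-4a}; treating the symmetric summand the same way and adding completes the proof.

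The routine parts are the integration by parts and the evaluations $\int_0^1(1-t)^p\,dt=\tfrac{1}{p+1}$, $\int_0^1 t^s\,dt=\tfrac{1}{s+1}$. The step requiring genuine care — and the main obstacle — is the $s$-convexity estimate of the half-interval argument: one must split $\tfrac{1+t}{2}a+\tfrac{1-t}{2}b$ into nodes producing only $|f'(a)|$ and $|f'(b)|$ (not $|f'(\tfrac{a+b}{2})|$), and then shepherd the competing $2^{-s}$ and $\tfrac{1}{s+1}$ constants through to the exact form $\left(\tfrac{1}{s+1}\right)^{2/q}$ with coefficients $2^{1-s}+s+1$ and $2^{1-s}$. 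Matching these constants to \eqref{1-4a} precisely, rather than merely up to an absolute factor, is the only delicate bookkeeping.
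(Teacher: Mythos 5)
Your proof is correct: the symmetric midpoint identity is verified by integration by parts, the H\"older step with the weight $(1-t)$ placed entirely in the $p$-factor yields $\left(\tfrac{1}{p+1}\right)^{1/p}$, both applications of $s$-convexity are legitimate, and the final comparison $2^{-s}\le \tfrac{2^{1-s}}{s+1}$ is exactly the condition $s+1\le 2$, so your weakening lands precisely on the coefficients of \eqref{1-4a}. The skeleton (midpoint identity, H\"older, $s$-convexity on the two half-intervals, elimination of the midpoint derivative) is the same as the paper's, but the execution of the last step genuinely differs. The paper obtains \eqref{1-4a} by specializing Theorem \ref{2.3} to $\alpha=\tfrac12$, $\lambda=0$: there the half-interval integrals are bounded via the right-hand Hermite--Hadamard inequality \eqref{1-2} (your pointwise-convexity-plus-integration computation re-derives exactly this inline), and the midpoint value is then eliminated using the \emph{left-hand} side of \eqref{1-2}, namely $2^{s-1}\left\vert f^{\prime}\left(\tfrac{a+b}{2}\right)\right\vert^{q}\le \tfrac{\left\vert f^{\prime}(a)\right\vert^{q}+\left\vert f^{\prime}(b)\right\vert^{q}}{s+1}$, which produces the factor $\tfrac{2^{1-s}}{s+1}$ directly. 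You instead eliminate the midpoint by the definition of $s$-convexity at parameter $\tfrac12$, getting the constant $2^{-s}$, which is strictly smaller than $\tfrac{2^{1-s}}{s+1}$ when $s<1$, and you then deliberately give this sharpness away to match the stated constants. What each approach buys: the paper's derivation is uniform in $(\alpha,\lambda)$, so \eqref{1-4a} drops out of the general theorem with no additional work; yours is self-contained (no appeal to the Dragomir--Fitzpatrick theorem) and shows in passing that \eqref{1-4a} is not optimal, since your intermediate bound with coefficients $\tfrac{1+2^{-s}}{s+1}$ and $\tfrac{2^{-s}}{s+1}$ is strictly better than the stated one for every $s<1$.
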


In \cite{SSO10}, Sarikaya et al. obtained a new upper bound for the
right-hand side of Simpson's inequality for $s-$convex mapping as follows:

\begin{theorem}
Let $f:I\subseteq \lbrack 0,\infty )\rightarrow \mathbb{R}$ be a
differentiable mapping on $I^{\circ }$, such that $f^{\prime }\in L[a,b]$,
where $a,b\in I^{\circ }$ with $a<b$. If $|f^{\prime }|^{q},\ $ is $s$%
-convex on $[a,b]$, for some fixed $s\in (0,1]$ and $q>1,$ then the
following inequality holds:%
\begin{eqnarray}
&&\left\vert \frac{1}{6}\left[ f(a)+4f\left( \frac{a+b}{2}\right) +f(b)%
\right] -\frac{1}{b-a}\dint\limits_{a}^{b}f(x)dx\right\vert \leq \frac{b-a}{%
12}\left( \frac{1+2^{p+1}}{3\left( p+1\right) }\right) ^{\frac{1}{p}}
\label{1-5} \\
&&\times \left\{ \left( \frac{\left\vert f^{\prime }\left( \frac{a+b}{2}%
\right) \right\vert ^{q}+\left\vert f^{\prime }\left( a\right) \right\vert
^{q}}{s+1}\right) ^{\frac{1}{q}}+\left( \frac{\left\vert f^{\prime }\left( 
\frac{a+b}{2}\right) \right\vert ^{q}+\left\vert f^{\prime }\left( b\right)
\right\vert ^{q}}{s+1}\right) ^{\frac{1}{q}}\right\} ,  \notag
\end{eqnarray}%
where $\frac{1}{p}+\frac{1}{q}=1.$
\end{theorem}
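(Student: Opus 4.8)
The plan is to obtain the Simpson identity as a special case of Lemma \ref{2.1} and then run the standard triangle-inequality/H\"older/$s$-convexity pipeline. First I would specialize Lemma \ref{2.1} to $\alpha=\tfrac12$ and $\lambda=\tfrac13$. With these values the left-hand side of that lemma collapses to $\tfrac16\left[f(a)+4f\left(\tfrac{a+b}{2}\right)+f(b)\right]-\tfrac{1}{b-a}\int_a^b f(x)\,dx$, while $\alpha\lambda=\tfrac16$ and $1-\lambda(1-\alpha)=\tfrac56$, so the identity reads
\[
(b-a)\left[\int_0^{1/2}\left(t-\tfrac16\right)f'(tb+(1-t)a)\,dt+\int_{1/2}^1\left(t-\tfrac56\right)f'(tb+(1-t)a)\,dt\right].
\]
Taking absolute values and applying the triangle inequality reduces everything to bounding the two integrals separately.

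Next I would apply H\"older's inequality (here $q>1$, so the conjugate exponent $p=q/(q-1)$ is finite) to each piece, separating the kernel from $|f'|^q$. For the first integral this produces the factor $\left(\int_0^{1/2}|t-\tfrac16|^p\,dt\right)^{1/p}$; splitting at $t=\tfrac16$, where $t-\tfrac16$ changes sign, gives $\int_0^{1/2}|t-\tfrac16|^p\,dt=\frac{(1/6)^{p+1}+(1/3)^{p+1}}{p+1}=\frac{1+2^{p+1}}{6^{p+1}(p+1)}$, and the change of variables $t\mapsto 1-t$ shows the second kernel $t-\tfrac56$ contributes the identical constant.

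The key remaining step is to introduce the midpoint into the $s$-convexity estimate. On $[0,\tfrac12]$ I would write the argument as the convex combination $tb+(1-t)a=2t\cdot\tfrac{a+b}{2}+(1-2t)a$, so that $s$-convexity of $|f'|^q$ yields $|f'(tb+(1-t)a)|^q\leq(2t)^s|f'(\tfrac{a+b}{2})|^q+(1-2t)^s|f'(a)|^q$; integrating and using $\int_0^{1/2}(2t)^s\,dt=\int_0^{1/2}(1-2t)^s\,dt=\frac{1}{2(s+1)}$ gives $\frac{1}{2(s+1)}\left(|f'(\tfrac{a+b}{2})|^q+|f'(a)|^q\right)$. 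Symmetrically, on $[\tfrac12,1]$ one writes $tb+(1-t)a=(2t-1)b+(2-2t)\tfrac{a+b}{2}$ to get the companion term with $|f'(b)|^q$, which explains the two separate expressions inside the braces of the claimed bound.

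Finally I would assemble the two contributions and simplify the constants. The only nonroutine bookkeeping is to verify that $(b-a)\left(\frac{1+2^{p+1}}{6^{p+1}(p+1)}\right)^{1/p}\left(\frac{1}{2}\right)^{1/q}$ equals $\frac{b-a}{12}\left(\frac{1+2^{p+1}}{3(p+1)}\right)^{1/p}$; this follows from $6^{1/p}\cdot 2^{1/q}=2^{1/p+1/q}3^{1/p}=2\cdot 3^{1/p}$, using $\tfrac1p+\tfrac1q=1$. I expect this constant-chasing, together with choosing the correct midpoint reparametrizations in the $s$-convexity step, to be the only genuine obstacle; the rest is the familiar Simpson-type template.
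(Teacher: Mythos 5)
Your proposal is correct and follows essentially the same route as the paper: there, inequality (\ref{1-5}) is obtained by specializing the general Theorem \ref{2.3} (Lemma \ref{2.1}, then H\"older's inequality with the kernel placed entirely in the $L^p$ factor, then the Hermite--Hadamard bound (\ref{1-2}) for $s$-convex functions) to $\alpha=\tfrac12$, $\lambda=\tfrac13$, which is exactly your pipeline. The only cosmetic difference is that where the paper invokes (\ref{1-2}) on the subintervals $\left[a,\tfrac{a+b}{2}\right]$ and $\left[\tfrac{a+b}{2},b\right]$, you re-derive that bound inline via the reparametrization $tb+(1-t)a=2t\cdot\tfrac{a+b}{2}+(1-2t)a$ and its companion; your kernel integrals and the constant identity $6\cdot 6^{1/p}\cdot 2^{1/q}=12\cdot 3^{1/p}$ all check out.
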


In \cite{KBOP07}, Kirmaci et al. proved the following trapezoid inequality:

\begin{theorem}
Let $f:I\subseteq \lbrack 0,\infty )\rightarrow \mathbb{R}$ be a
differentiable mapping on $I^{\circ }$, such that $f^{\prime }\in L[a,b]$,
where $a,b\in I^{\circ }$, $a<b$. If $|f^{\prime }|^{q},\ $ is $s$-convex on 
$[a,b]$, for some fixed $s\in (0,1)$ and $q>1,$ then%
\begin{eqnarray}
&&\left\vert \frac{f\left( a\right) +f\left( b\right) }{2}-\frac{1}{b-a}%
\dint\limits_{a}^{b}f(x)dx\right\vert \leq \frac{b-a}{2}\left( \frac{q-1}{%
2\left( 2q-1\right) }\right) ^{\frac{q-1}{q}}\left( \frac{1}{s+1}\right) ^{%
\frac{1}{q}}  \label{1-6} \\
&&\times \left\{ \left( \left\vert f^{\prime }\left( \frac{a+b}{2}\right)
\right\vert ^{q}+\left\vert f^{\prime }\left( a\right) \right\vert
^{q}\right) ^{\frac{1}{q}}+\left( \left\vert f^{\prime }\left( \frac{a+b}{2}%
\right) \right\vert ^{q}+\left\vert f^{\prime }\left( b\right) \right\vert
^{q}\right) ^{\frac{1}{q}}\right\} .  \notag
\end{eqnarray}
\end{theorem}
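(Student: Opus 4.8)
The plan is to start from the Dragomir--Agarwal type trapezoid identity, which is exactly Lemma \ref{2.1} specialized to $\lambda=1$ and $\alpha=\tfrac12$. With these choices the weighted endpoint term collapses to $\tfrac{f(a)+f(b)}{2}$, the interior value $f(\alpha a+(1-\alpha)b)$ drops out because of the factor $1-\lambda=0$, and both sub-integrals acquire the common kernel $t-\tfrac12$, giving
\[
\frac{f(a)+f(b)}{2}-\frac{1}{b-a}\int_a^b f(x)\,dx=(b-a)\int_0^1\left(t-\tfrac12\right)f'\!\left(tb+(1-t)a\right)dt .
\]
Taking absolute values turns the kernel into $\left|t-\tfrac12\right|$, and I would split the integral at $t=\tfrac12$, since on each half the running point $tb+(1-t)a$ lies between one endpoint and the midpoint $\tfrac{a+b}{2}$.

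The key algebraic step is to rewrite the argument as a convex combination that exposes the midpoint. For $t\in[0,\tfrac12]$ one has $tb+(1-t)a=(1-2t)\,a+2t\cdot\tfrac{a+b}{2}$, and for $t\in[\tfrac12,1]$ one has $tb+(1-t)a=(2t-1)\,b+2(1-t)\cdot\tfrac{a+b}{2}$, both being genuine convex combinations with nonnegative weights summing to $1$. Applying the $s$-convexity of $|f'|^q$ to each gives
\[
\left|f'(tb+(1-t)a)\right|^q\le (1-2t)^s|f'(a)|^q+(2t)^s\left|f'\!\left(\tfrac{a+b}{2}\right)\right|^q
\]
on the first half, and the analogous bound with $a$ replaced by $b$ and $(1-2t,2t)$ replaced by $(2t-1,2(1-t))$ on the second half.

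Next I would apply Hölder's inequality on each half with exponents $p$ and $q$, $\tfrac1p+\tfrac1q=1$, separating the kernel from the derivative factor. The kernel contributes $\int_0^{1/2}(\tfrac12-t)^p\,dt=\tfrac{1}{2^{p+1}(p+1)}$ on each half; since $\tfrac{1}{p+1}=\tfrac{q-1}{2q-1}$ and $\tfrac1p=\tfrac{q-1}{q}$, the prefactor $(b-a)$ together with the $\tfrac1p$-power of this integral produces exactly $\tfrac{b-a}{2}\bigl(\tfrac{q-1}{2(2q-1)}\bigr)^{(q-1)/q}$, matching the stated kernel constant. The derivative factor comes from the elementary power integrals $\int_0^{1/2}(2t)^s\,dt=\int_0^{1/2}(1-2t)^s\,dt=\tfrac{1}{2(s+1)}\le\tfrac{1}{s+1}$, which bound the $s$-convexity estimates by $\bigl(\tfrac{1}{s+1}\bigr)^{1/q}$ times $\bigl(|f'(\tfrac{a+b}{2})|^q+|f'(a)|^q\bigr)^{1/q}$ on the first half and $\bigl(|f'(\tfrac{a+b}{2})|^q+|f'(b)|^q\bigr)^{1/q}$ on the second. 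Summing the two halves yields the claimed inequality.

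I expect the only real obstacle to be the midpoint rewriting together with the constant bookkeeping: one must correctly identify the convex-combination weights on each subinterval so that the term $f'(\tfrac{a+b}{2})$ appears, and then track the powers of $2$, $p+1$, and $s+1$ through Hölder's inequality and the substitutions $u=2t$, $u=1-2t$. The integration-by-parts identity behind Lemma \ref{2.1} and the Hölder step itself are routine; everything hinges on organizing these elementary power integrals carefully.
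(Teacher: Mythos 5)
Your proof is correct, and it runs on the same machinery the paper uses for its own results: the identity is Lemma \ref{2.1} at $\alpha =\frac{1}{2}$, $\lambda =1$, followed by H\"{o}lder's inequality on each half and an $s$-convexity bound on the derivative integrals, and the constant bookkeeping checks out, since indeed
\begin{equation*}
(b-a)\left( \frac{1}{2^{p+1}(p+1)}\right) ^{1/p}=\frac{b-a}{2}\left( \frac{q-1}{2(2q-1)}\right) ^{(q-1)/q}.
\end{equation*}
One point of comparison is worth making. The paper never proves this statement itself (it is quoted from Kirmaci et al. \cite{KBOP07} as background); its own closest derivation is Theorem \ref{2.3} specialized at $\alpha =\frac{1}{2}$, $\lambda =1$, which yields the sharper inequality (\ref{2-19}). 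There the derivative integrals are bounded by applying the right-hand Hadamard inequality (\ref{1-2}) for $\left\vert f^{\prime }\right\vert ^{q}$ on $\left[ a,\frac{a+b}{2}\right] $ and $\left[ \frac{a+b}{2},b\right] $, whereas you bound them by rewriting $tb+(1-t)a$ as a convex combination of an endpoint and the midpoint and integrating the pointwise $s$-convexity estimate; these are the same computation and give the identical value $\frac{1}{2(s+1)}\left( \left\vert f^{\prime }(a)\right\vert ^{q}+\left\vert f^{\prime }\left( \frac{a+b}{2}\right) \right\vert ^{q}\right) $ (inequality (\ref{1-2}) is itself proved by exactly this integration). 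Consequently, before your final weakening your bound is precisely (\ref{2-19}), with constant $\frac{b-a}{4}\left( \frac{1}{p+1}\right) ^{1/p}\left( \frac{1}{s+1}\right) ^{1/q}$, and the step $\frac{1}{2(s+1)}\leq \frac{1}{s+1}$ --- i.e. the discarded factor $2^{-1/q}$ --- is exactly the gap the paper refers to when it remarks that (\ref{2-19}) is better than (\ref{1-6}). So your argument proves both the stated theorem and the paper's improvement of it.
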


\section{Main results}

Let $f:I\subseteq 
%TCIMACRO{\U{211d} }%
%BeginExpansion
\mathbb{R}
%EndExpansion
\rightarrow 
%TCIMACRO{\U{211d} }%
%BeginExpansion
\mathbb{R}
%EndExpansion
$ be a differentiable function on $I^{\circ }$, the interior of $I$,
throughout this section we will take%
\begin{eqnarray*}
&&I_{f}\left( \lambda ,\alpha ,a,b\right) \\
&=&\lambda \left( \alpha f(a)+\left( 1-\alpha \right) f(b)\right) +\left(
1-\lambda \right) f(\alpha a+\left( 1-\alpha \right) b)-\frac{1}{b-a}%
\dint\limits_{a}^{b}f(x)dx
\end{eqnarray*}%
where $a,b\in I^{\circ }$ with $a<b$ and $\alpha ,\lambda \in \left[ 0,1%
\right] $.

\begin{theorem}
\label{2.2}Let $f:I\subseteq \mathbb{R\rightarrow R}$ be a differentiable
mapping on $I^{\circ }$ such that $f^{\prime }\in L[a,b]$, where $a,b\in
I^{\circ }$ with $a<b$ and $\alpha ,\lambda \in \left[ 0,1\right] $. If $%
\left\vert f^{\prime }\right\vert ^{q}$ is $s$-convex on $[a,b]$, for some
fixed $s\in (0,1]$ \ and $q\geq 1,$ then

(i) for $\alpha \lambda \leq 1-\alpha \leq 1-\lambda \left( 1-\alpha \right) 
$ we have%
\begin{eqnarray*}
\left\vert I_{f}\left( \lambda ,\alpha ,a,b\right) \right\vert &\leq &\left(
b-a\right) \left[ \gamma _{2}^{1-\frac{1}{q}}(\alpha ,\lambda )\left(
c_{1}(\alpha ,\lambda ,s)\left\vert f^{\prime }(b)\right\vert
^{q}+c_{2}(\alpha ,\lambda ,s)\left\vert f^{\prime }(a)\right\vert
^{q}\right) ^{\frac{1}{q}}\right. \\
&&+\left. \gamma _{2}^{1-\frac{1}{q}}(1-\alpha ,\lambda )\left(
c_{2}(1-\alpha ,\lambda ,s)\left\vert f^{\prime }(b)\right\vert
^{q}+c_{1}(1-\alpha ,\lambda ,s)\left\vert f^{\prime }(a)\right\vert
^{q}\right) ^{\frac{1}{q}}\right] ,
\end{eqnarray*}

(ii) for $\alpha \lambda \leq 1-\lambda \left( 1-\alpha \right) \leq
1-\alpha $ we have%
\begin{eqnarray*}
\left\vert I_{f}\left( \lambda ,\alpha ,a,b\right) \right\vert &\leq &\left(
b-a\right) \left[ \gamma _{2}^{1-\frac{1}{q}}(\alpha ,\lambda )\left(
c_{1}(\alpha ,\lambda ,s)\left\vert f^{\prime }(b)\right\vert
^{q}+c_{2}(\alpha ,\lambda ,s)\left\vert f^{\prime }(a)\right\vert
^{q}\right) ^{\frac{1}{q}}\right. \\
&&+\left. \gamma _{1}^{1-\frac{1}{q}}(1-\alpha ,\lambda )\left(
c_{4}(1-\alpha ,\lambda ,s)\left\vert f^{\prime }(b)\right\vert
^{q}+c_{3}(1-\alpha ,\lambda ,s)\left\vert f^{\prime }(a)\right\vert
^{q}\right) ^{\frac{1}{q}}\right] ,
\end{eqnarray*}

(iii) for $1-\alpha \leq \alpha \lambda \leq 1-\lambda \left( 1-\alpha
\right) $ we have%
\begin{eqnarray*}
\left\vert I_{f}\left( \lambda ,\alpha ,a,b\right) \right\vert &\leq &\left(
b-a\right) \left[ \gamma _{1}^{1-\frac{1}{q}}(\alpha ,\lambda )\left(
c_{3}(\alpha ,\lambda ,s)\left\vert f^{\prime }(b)\right\vert
^{q}+c_{4}(\alpha ,\lambda ,s)\left\vert f^{\prime }(a)\right\vert
^{q}\right) ^{\frac{1}{q}}\right. \\
&&+\left. \gamma _{2}^{1-\frac{1}{q}}(1-\alpha ,\lambda )\left(
c_{2}(1-\alpha ,\lambda ,s)\left\vert f^{\prime }(b)\right\vert
^{q}+c_{1}(1-\alpha ,\lambda ,s)\left\vert f^{\prime }(a)\right\vert
^{q}\right) ^{\frac{1}{q}}\right]
\end{eqnarray*}
where 
\begin{eqnarray*}
\gamma _{1}(\alpha ,\lambda ) &=&\left( 1-\alpha \right) \left[ \alpha
\lambda -\frac{\left( 1-\alpha \right) }{2}\right] , \\
\ \gamma _{2}(\alpha ,\lambda ) &=&\left( \alpha \lambda \right) ^{2}-\gamma
_{1}(\alpha ,\lambda )\ ,
\end{eqnarray*}%
\begin{eqnarray*}
c_{1}(\alpha ,\lambda ,s) &=&\left( \alpha \lambda \right) ^{s+2}\frac{2}{%
\left( s+1\right) \left( s+2\right) }-\left( \alpha \lambda \right) \frac{%
\left( 1-\alpha \right) ^{s+1}}{s+1}+\frac{\left( 1-\alpha \right) ^{s+2}}{%
s+2}, \\
c_{2}(\alpha ,\lambda ,s) &=&\left( 1-\alpha \lambda \right) ^{s+2}\frac{2}{%
\left( s+1\right) \left( s+2\right) }-\frac{\left( 1-\alpha \lambda \right)
\left( 1+\alpha ^{s+1}\right) }{s+1}+\frac{1+\alpha ^{s+2}}{s+2}, \\
c_{3}(\alpha ,\lambda ,s) &=&\left( \alpha \lambda \right) \frac{\left(
1-\alpha \right) ^{s+1}}{s+1}-\frac{\left( 1-\alpha \right) ^{s+2}}{s+2}, \\
c_{4}(\alpha ,\lambda ,s) &=&\frac{\left( \alpha \lambda -1\right) \left(
1-\alpha ^{s+1}\right) }{s+1}+\frac{1-\alpha ^{s+2}}{s+2}.
\end{eqnarray*}
\end{theorem}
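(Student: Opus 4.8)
The plan is to start from the identity in Lemma \ref{2.1}, apply the triangle inequality to split $I_f$ into two weighted integrals, estimate each by the weighted power-mean inequality, and then insert the $s$-convexity bound on $|f'|^q$ to evaluate the resulting moment integrals explicitly. The three cases of the theorem will correspond precisely to the three admissible configurations of the sign-changes of the two weight functions relative to the endpoints of integration.

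First I would write, from Lemma \ref{2.1},
\[
\left\vert I_f(\lambda,\alpha,a,b)\right\vert \le (b-a)\left[\int_0^{1-\alpha}\left\vert t-\alpha\lambda\right\vert\,\left\vert f'(tb+(1-t)a)\right\vert\,dt + \int_{1-\alpha}^1 \left\vert t-1+\lambda(1-\alpha)\right\vert\,\left\vert f'(tb+(1-t)a)\right\vert\,dt\right].
\]
To treat the second integral on the same footing as the first, I would substitute $t\mapsto 1-t$, which carries it into $\int_0^{\alpha}\left\vert t-\lambda(1-\alpha)\right\vert\,\left\vert f'(ta+(1-t)b)\right\vert\,dt$; this is exactly the first integral with $\alpha$ replaced by $1-\alpha$ and with the endpoints $a,b$ interchanged, which explains why the coefficients $c_1,c_2$ reappear in (i) with arguments $(1-\alpha,\lambda,s)$ and with $\left\vert f'(a)\right\vert$ and $\left\vert f'(b)\right\vert$ swapped. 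On each integral I then apply the power-mean inequality: for $q\ge 1$ and a nonnegative weight $w$,
\[
\int w(t)g(t)\,dt \le \left(\int w(t)\,dt\right)^{1-\frac1q}\left(\int w(t)g(t)^q\,dt\right)^{\frac1q},
\]
which follows from H\"older and is trivial when $q=1$. With $w=\left\vert t-\alpha\lambda\right\vert$ and $g=\left\vert f'(tb+(1-t)a)\right\vert$, the prefactor $\left(\int_0^{1-\alpha}\left\vert t-\alpha\lambda\right\vert\,dt\right)^{1-1/q}$ produces $\gamma_2^{1-1/q}(\alpha,\lambda)$ when the root $\alpha\lambda$ of the weight lies inside $[0,1-\alpha]$, and $\gamma_1^{1-1/q}(\alpha,\lambda)$ when the weight keeps a fixed sign on that window; a direct antiderivative computation confirms $\int_0^{1-\alpha}\left\vert t-\alpha\lambda\right\vert\,dt=\gamma_2$ in the first situation and $=\gamma_1$ in the second.

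For the inner $q$-th power I would insert the $s$-convexity estimate $\left\vert f'(tb+(1-t)a)\right\vert^{q} \le t^{s}\left\vert f'(b)\right\vert^{q}+(1-t)^{s}\left\vert f'(a)\right\vert^{q}$, so the remaining work is to evaluate the moments $\int\left\vert t-\alpha\lambda\right\vert\,t^{s}\,dt$ and $\int\left\vert t-\alpha\lambda\right\vert\,(1-t)^{s}\,dt$ over the relevant range. These reduce to the constants $c_1,\dots,c_4$: the identifications $c_1=\int_0^{1-\alpha}\left\vert t-\alpha\lambda\right\vert t^{s}\,dt$, $c_3=\int_0^{1-\alpha}(\alpha\lambda-t)t^{s}\,dt$, and $c_4=\int_0^{1-\alpha}(\alpha\lambda-t)(1-t)^{s}\,dt$ are straightforward, while $c_2=\int_0^{1-\alpha}\left\vert t-\alpha\lambda\right\vert(1-t)^{s}\,dt$ needs the substitution $u=1-t$ and a split at $u=1-\alpha\lambda$; I expect this $c_2$ computation to be the most laborious step. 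Finally I would organize the sign analysis. The inequality $\alpha\lambda\le 1-\lambda(1-\alpha)$ holds automatically since $1-\lambda(1-\alpha)-\alpha\lambda=1-\lambda\ge0$, so only the position of $\alpha\lambda$ relative to $1-\alpha$ (governing the first integral) and of $\lambda(1-\alpha)$ relative to $\alpha$ (governing the transformed second integral) remain free. The three hypotheses enumerate exactly the resulting combinations: pairing $\gamma_2,c_1,c_2$ with an interior root of the weight and $\gamma_1,c_3,c_4$ with a one-signed weight, and then adding the two estimated integrals, yields (i), (ii), and (iii) respectively.
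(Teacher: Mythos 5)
Your proposal is correct and follows essentially the same route as the paper's own proof: Lemma \ref{2.1} plus the triangle inequality, the power-mean inequality applied to each of the two weighted integrals, the $s$-convexity bound $\left\vert f^{\prime }\left( tb+(1-t)a\right) \right\vert ^{q}\leq t^{s}\left\vert f^{\prime }(b)\right\vert ^{q}+(1-t)^{s}\left\vert f^{\prime }(a)\right\vert ^{q}$, and explicit evaluation of the moment integrals with a case analysis on where the weights change sign, yielding exactly the constants $\gamma _{1},\gamma _{2},c_{1},\dots ,c_{4}$. The only cosmetic difference is that you perform the substitution $t\mapsto 1-t$ on the second integral at the outset to expose the $\alpha \leftrightarrow 1-\alpha $, $a\leftrightarrow b$ symmetry, whereas the paper invokes the same substitution only when computing the individual moment integrals; the two orderings are equivalent.
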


\begin{proof}
Suppose that $q\geq 1.$ From Lemma \ref{2.1} and using the well known power
mean inequality, we have%
\begin{eqnarray*}
&&\left\vert I_{f}\left( \lambda ,\alpha ,a,b\right) \right\vert \\
&\leq &\left( b-a\right) \left[ \dint\limits_{0}^{1-\alpha }\left\vert
t-\alpha \lambda \right\vert \left\vert f^{\prime }\left( tb+(1-t)a\right)
\right\vert dt+\dint\limits_{1-\alpha }^{1}\left\vert t-1+\lambda \left(
1-\alpha \right) \right\vert \left\vert f^{\prime }\left( tb+(1-t)a\right)
\right\vert dt\right] \\
&\leq &\left( b-a\right) \left\{ \left( \dint\limits_{0}^{1-\alpha
}\left\vert t-\alpha \lambda \right\vert dt\right) ^{1-\frac{1}{q}}\left(
\dint\limits_{0}^{1-\alpha }\left\vert t-\alpha \lambda \right\vert
\left\vert f^{\prime }\left( tb+(1-t)a\right) \right\vert ^{q}dt\right) ^{%
\frac{1}{q}}\right.
\end{eqnarray*}%
\begin{equation}
\left. +\left( \dint\limits_{1-\alpha }^{1}\left\vert t-1+\lambda \left(
1-\alpha \right) \right\vert dt\right) ^{1-\frac{1}{q}}\left(
\dint\limits_{1-\alpha }^{1}\left\vert t-1+\lambda \left( 1-\alpha \right)
\right\vert \left\vert f^{\prime }\left( tb+(1-t)a\right) \right\vert
^{q}dt\right) ^{\frac{1}{q}}\right\}  \label{2-3}
\end{equation}%
Consider 
\begin{equation*}
I_{1}=\dint\limits_{0}^{1-\alpha }\left\vert t-\alpha \lambda \right\vert
\left\vert f^{\prime }\left( tb+(1-t)a\right) \right\vert ^{q}dt,\ \
I_{2}=\dint\limits_{1-\alpha }^{1}\left\vert t-1+\lambda \left( 1-\alpha
\right) \right\vert \left\vert f^{\prime }\left( tb+(1-t)a\right)
\right\vert ^{q}dt
\end{equation*}

Since $\left\vert f^{\prime }\right\vert ^{q}$ is $s$-convex on $[a,b],$%
\begin{equation}
I_{1}\leq \left\vert f^{\prime }(b)\right\vert
^{q}\dint\limits_{0}^{1-\alpha }\left\vert t-\alpha \lambda \right\vert
t^{s}dt+\left\vert f^{\prime }(a)\right\vert ^{q}\dint\limits_{0}^{1-\alpha
}\left\vert t-\alpha \lambda \right\vert (1-t)^{s}dt.  \label{2-4}
\end{equation}%
Similarly%
\begin{equation}
I_{2}\leq \left\vert f^{\prime }(b)\right\vert ^{q}\dint\limits_{1-\alpha
}^{1}\left\vert t-1+\lambda \left( 1-\alpha \right) \right\vert
t^{s}dt+\left\vert f^{\prime }(a)\right\vert ^{q}\dint\limits_{1-\alpha
}^{1}\left\vert t-1+\lambda \left( 1-\alpha \right) \right\vert (1-t)^{s}dt.
\label{2-5}
\end{equation}%
Additionally, by simple computation%
\begin{equation}
\dint\limits_{0}^{1-\alpha }\left\vert t-\alpha \lambda \right\vert
dt=\left\{ 
\begin{array}{cc}
\gamma _{2}(\alpha ,\lambda ), & \alpha \lambda \leq 1-\alpha \\ 
\gamma _{1}(\alpha ,\lambda ), & \alpha \lambda \geq 1-\alpha%
\end{array}%
\right. ,  \label{2-6}
\end{equation}%
\begin{equation*}
\gamma _{1}(\alpha ,\lambda )=\left( 1-\alpha \right) \left[ \alpha \lambda -%
\frac{\left( 1-\alpha \right) }{2}\right] ,\ \gamma _{2}(\alpha ,\lambda
)=\left( \alpha \lambda \right) ^{2}-\gamma _{1}(\alpha ,\lambda )\ ,
\end{equation*}%
\begin{equation*}
\dint\limits_{1-\alpha }^{1}\left\vert t-1+\lambda \left( 1-\alpha \right)
\right\vert dt=\dint\limits_{0}^{\alpha }\left\vert t-\left( 1-\alpha
\right) \lambda \right\vert dt
\end{equation*}%
\begin{equation*}
=\left\{ 
\begin{array}{cc}
\gamma _{1}(1-\alpha ,\lambda ), & 1-\lambda \left( 1-\alpha \right) \leq
1-\alpha \\ 
\gamma _{2}(1-\alpha ,\lambda ), & 1-\lambda \left( 1-\alpha \right) \geq
1-\alpha%
\end{array}%
\right. ,
\end{equation*}%
\begin{equation*}
\dint\limits_{0}^{1-\alpha }\left\vert t-\alpha \lambda \right\vert
t^{s}dt=\left\{ 
\begin{array}{cc}
c_{1}(\alpha ,\lambda ,s), & \alpha \lambda \leq 1-\alpha \\ 
c_{3}(\alpha ,\lambda ,s), & \alpha \lambda \geq 1-\alpha%
\end{array}%
\right.
\end{equation*}%
\begin{equation*}
\dint\limits_{0}^{1-\alpha }\left\vert t-\alpha \lambda \right\vert
(1-t)^{s}dt=\left\{ 
\begin{array}{cc}
c_{2}(\alpha ,\lambda ,s), & \alpha \lambda \leq 1-\alpha \\ 
c_{4}(\alpha ,\lambda ,s), & \alpha \lambda \geq 1-\alpha%
\end{array}%
\right.
\end{equation*}%
\begin{equation*}
\dint\limits_{1-\alpha }^{1}\left\vert t-1+\lambda \left( 1-\alpha \right)
\right\vert t^{s}=\dint\limits_{0}^{\alpha }\left\vert t-\left( 1-\alpha
\right) \lambda \right\vert (1-t)^{s}dt
\end{equation*}%
\begin{equation*}
=\left\{ 
\begin{array}{cc}
c_{4}(1-\alpha ,\lambda ,s), & 1-\lambda \left( 1-\alpha \right) \leq
1-\alpha \\ 
c_{2}(1-\alpha ,\lambda ,s), & 1-\lambda \left( 1-\alpha \right) \geq
1-\alpha%
\end{array}%
\right.
\end{equation*}%
\begin{equation*}
\dint\limits_{1-\alpha }^{1}\left\vert t-1+\lambda \left( 1-\alpha \right)
\right\vert (1-t)^{s}dt=\dint\limits_{0}^{\alpha }\left\vert t-\left(
1-\alpha \right) \lambda \right\vert t^{s}dt
\end{equation*}%
\begin{equation}
=\left\{ 
\begin{array}{cc}
c_{3}(1-\alpha ,\lambda ,s), & 1-\lambda \left( 1-\alpha \right) \leq
1-\alpha \\ 
c_{1}(1-\alpha ,\lambda ,s), & 1-\lambda \left( 1-\alpha \right) \geq
1-\alpha%
\end{array}%
\right. .  \label{2-7}
\end{equation}%
Thus, using (\ref{2-4})-(\ref{2-7}) in (\ref{2-3}), we obtain desired
results. This completes the proof.
\end{proof}

\begin{corollary}
Under the assumptions of Theorem \ref{2.2} with $q=1,$

(i) if $\alpha \lambda \leq 1-\alpha \leq 1-\lambda \left( 1-\alpha \right) $%
, then we have%
\begin{eqnarray*}
\left\vert I_{f}\left( \lambda ,\alpha ,a,b\right) \right\vert &\leq &\left(
b-a\right) \left[ \left( c_{1}(\alpha ,\lambda ,s)+c_{2}(1-\alpha ,\lambda
,s)\right) \left\vert f^{\prime }(b)\right\vert \right. \\
&&+\left. \left( c_{2}(\alpha ,\lambda ,s)+c_{1}(1-\alpha ,\lambda
,s)\right) \left\vert f^{\prime }(a)\right\vert \right] ,
\end{eqnarray*}

(ii) if $\alpha \lambda \leq 1-\lambda \left( 1-\alpha \right) \leq 1-\alpha 
$, then we have%
\begin{eqnarray*}
\left\vert I_{f}\left( \lambda ,\alpha ,a,b\right) \right\vert &\leq &\left(
b-a\right) \left[ \left( c_{1}(\alpha ,\lambda ,s)+c_{4}(1-\alpha ,\lambda
,s)\right) \left\vert f^{\prime }(b)\right\vert \right. \\
&&+\left. \left( c_{2}(\alpha ,\lambda ,s)+c_{3}(1-\alpha ,\lambda
,s)\right) \left\vert f^{\prime }(a)\right\vert \right] ,
\end{eqnarray*}%
(iii) if $1-\alpha \leq \alpha \lambda \leq 1-\lambda \left( 1-\alpha
\right) $, then we have%
\begin{eqnarray*}
\left\vert I_{f}\left( \lambda ,\alpha ,a,b\right) \right\vert &\leq &\left(
b-a\right) \left[ \left( c_{3}(\alpha ,\lambda ,s)+c_{2}(1-\alpha ,\lambda
,s)\right) \left\vert f^{\prime }(b)\right\vert \right. \\
&&+\left. \left( c_{4}(\alpha ,\lambda ,s)+c_{1}(1-\alpha ,\lambda
,s)\right) \left\vert f^{\prime }(a)\right\vert \right]
\end{eqnarray*}
\end{corollary}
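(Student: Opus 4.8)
The plan is to read off the three estimates directly from Theorem \ref{2.2} by setting $q=1$; no fresh integration is needed, since every elementary integral has already been evaluated in the proof of that theorem. When $q=1$ the outer weights, which all appear with exponent $1-\frac{1}{q}$, reduce to $\gamma_{1}^{0}=\gamma_{2}^{0}=1$, and the outer $q$-th roots, appearing with exponent $\frac{1}{q}=1$, linearize: each bracket $\left( c_{i}\left\vert f^{\prime }(b)\right\vert ^{q}+c_{j}\left\vert f^{\prime }(a)\right\vert ^{q}\right) ^{1/q}$ becomes $c_{i}\left\vert f^{\prime }(b)\right\vert +c_{j}\left\vert f^{\prime }(a)\right\vert$. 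Substituting these into parts (i)--(iii) of Theorem \ref{2.2} and collecting the coefficients of $\left\vert f^{\prime }(b)\right\vert$ and of $\left\vert f^{\prime }(a)\right\vert$ produces precisely the three displayed inequalities.

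To avoid relying on the degenerate convention $\gamma^{0}=1$ (which is awkward should some $\gamma$ vanish), I would instead re-run the argument from scratch for $q=1$, where it is in fact shorter. Starting from Lemma \ref{2.1} and the triangle inequality one obtains the first bound in the proof of Theorem \ref{2.2}, namely the sum of $\int_{0}^{1-\alpha}\left\vert t-\alpha \lambda \right\vert \left\vert f^{\prime }(tb+(1-t)a)\right\vert dt$ and $\int_{1-\alpha}^{1}\left\vert t-1+\lambda(1-\alpha)\right\vert \left\vert f^{\prime }(tb+(1-t)a)\right\vert dt$, scaled by $(b-a)$. For $q=1$ the power-mean splitting is unnecessary; applying the $s$-convexity of $\left\vert f^{\prime }\right\vert$ in the form $\left\vert f^{\prime }(tb+(1-t)a)\right\vert \leq t^{s}\left\vert f^{\prime }(b)\right\vert +(1-t)^{s}\left\vert f^{\prime }(a)\right\vert$ to each integrand reproduces the bounds (\ref{2-4}) and (\ref{2-5}) with $q=1$.

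It then remains only to insert the closed forms (\ref{2-6})--(\ref{2-7}) of the resulting elementary integrals, choosing the branch dictated by whichever ordering of $\alpha\lambda$, $1-\alpha$, and $1-\lambda(1-\alpha)$ is assumed, and to group like terms in $\left\vert f^{\prime }(a)\right\vert$ and $\left\vert f^{\prime }(b)\right\vert$; this gives the three cases (i), (ii), (iii). The only thing requiring care is the bookkeeping: for each hypothesis one must pick the correct branch on each of the two intervals $[0,1-\alpha]$ and $[1-\alpha,1]$ ($c_{1}$ versus $c_{3}$, $c_{2}$ versus $c_{4}$, and the matching $\gamma$), exactly as in the proof of Theorem \ref{2.2}. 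Beyond this routine case-tracking the corollary presents no obstacle.
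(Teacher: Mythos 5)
Your proposal is correct and matches the paper's own (implicit) justification: the corollary is obtained exactly by setting $q=1$ in Theorem \ref{2.2}, whereupon the factors $\gamma_{1}^{1-\frac{1}{q}},\gamma_{2}^{1-\frac{1}{q}}$ become $1$ and the $\frac{1}{q}$-th powers linearize, and your case-by-case collection of the coefficients of $\left\vert f^{\prime }(a)\right\vert$ and $\left\vert f^{\prime }(b)\right\vert$ reproduces parts (i)--(iii) verbatim. Your additional from-scratch run of the argument for $q=1$ (bypassing the power-mean step and the $\gamma^{0}$ convention) is a sound but unnecessary precaution, since a vanishing $\gamma$ forces the corresponding integral bound to vanish as well, so the substituted inequality still holds.
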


\begin{remark}
In Theorem \ref{2.2}, if we take $s=1$, then we obtain the inequality (\ref%
{1-3}).
\end{remark}

\begin{remark}
In Theorem \ref{2.2}, if we take $\alpha =\frac{1}{2}$ and $\lambda =\frac{1%
}{3},$ then we have the following Simpson type inequality%
\begin{equation}
\left\vert \frac{1}{6}\left[ f(a)+4f\left( \frac{a+b}{2}\right) +f(b)\right]
-\frac{1}{b-a}\dint\limits_{a}^{b}f(x)dx\right\vert \leq \frac{b-a}{2}\left( 
\frac{5}{36}\right) ^{1-\frac{1}{q}}  \label{2-9}
\end{equation}%
\begin{eqnarray}
&&\times \left\{ \left( \frac{(2s+1)3^{s+1}+2}{3\times 6^{s+1}(s+1)(s+2)}%
\left\vert f^{\prime }(b)\right\vert ^{q}+\frac{2\times
5^{s+2}+(s-4)6^{s+1}-(2s+7)3^{s+1}}{3\times 6^{s+1}(s+1)(s+2)}\left\vert
f^{\prime }(a)\right\vert ^{q}\right) ^{\frac{1}{q}}\right.  \notag \\
&&\left. +\left( \frac{2\times 5^{s+2}+(s-4)6^{s+1}-(2s+7)3^{s+1}}{%
3.6^{s+1}(s+1)(s+2)}\left\vert f^{\prime }(b)\right\vert ^{q}+\frac{%
(2s+1)3^{s+1}+2}{3\times 6^{s+1}(s+1)(s+2)}\left\vert f^{\prime
}(a)\right\vert ^{q}\right) ^{\frac{1}{q}}\right\} ,  \notag
\end{eqnarray}%
which is the same of the inequality in \cite[Theorem 10]{SSO10} .
\end{remark}

\begin{remark}
In Theorem \ref{2.2} , if we take $\alpha =\frac{1}{2}$ and $\lambda =0,$
then we have following midpoint inequality%
\begin{eqnarray}
&&\left\vert f\left( \frac{a+b}{2}\right) -\frac{1}{b-a}\dint%
\limits_{a}^{b}f(x)dx\right\vert \leq \frac{b-a}{8}\left( \frac{2}{(s+1)(s+2)%
}\right) ^{\frac{1}{q}}  \label{2-10} \\
&&\times \left\{ \left( \frac{2^{1-s}\left( s+1\right) \left\vert f^{\prime
}(b)\right\vert ^{q}}{2}+\frac{2^{1-s}\left( 2^{s+2}-s-3\right) \left\vert
f^{\prime }(a)\right\vert ^{q}}{2}\right) ^{\frac{1}{q}}\right.  \notag \\
&&\left. +\left( \frac{2^{1-s}\left( s+1\right) \left\vert f^{\prime
}(a)\right\vert ^{q}}{2}+\frac{2^{1-s}\left( 2^{s+2}-s-3\right) \left\vert
f^{\prime }(b)\right\vert ^{q}}{2}\right) ^{\frac{1}{q}}\right\} .  \notag
\end{eqnarray}%
We note that the obtained midpoint inequality (\ref{2-10}) is better than
the inequality (\ref{1-4}). Because $\frac{s+1}{2}\leq 1$ and $\frac{%
2^{s+2}-s-3}{2}\leq \frac{2^{1-s}+1}{2^{1-s}}.$
\end{remark}

\begin{remark}
In Theorem \ref{2.2} , if we take $\alpha =\frac{1}{2}$ , and $\lambda =1,$
then we get the following trapezoid inequality%
\begin{eqnarray*}
&&\left\vert \frac{f\left( a\right) +f\left( b\right) }{2}-\frac{1}{b-a}%
\dint\limits_{a}^{b}f(x)dx\right\vert \leq \frac{b-a}{8}\left( \frac{2^{1-s}%
}{(s+1)(s+2)}\right) ^{\frac{1}{q}} \\
&&\times \left\{ \left( \left\vert f^{\prime }(b)\right\vert ^{q}+\left\vert
f^{\prime }(a)\right\vert ^{q}\left( 2^{s+1}+1\right) \right) ^{\frac{1}{q}%
}+\left( \left\vert f^{\prime }(a)\right\vert ^{q}+\left\vert f^{\prime
}(b)\right\vert ^{q}\left( 2^{s+1}+1\right) \right) ^{\frac{1}{q}}\right\}
\end{eqnarray*}
\end{remark}

Using Lemma \ref{2.1} we shall give another result for convex functions as
follows.

\begin{theorem}
\label{2.3}Let $f:I\subseteq \mathbb{R\rightarrow R}$ be a differentiable
mapping on $I^{\circ }$ such that $f^{\prime }\in L[a,b]$, where $a,b\in
I^{\circ }$ with $a<b$ and $\alpha ,\lambda \in \left[ 0,1\right] $. If $%
\left\vert f^{\prime }\right\vert ^{q}$ is $s$-convex on $[a,b]$, for some
fixed $s\in (0,1]$ and $q>1,$ then 
\begin{equation}
\left\vert I_{f}\left( \lambda ,\alpha ,a,b\right) \right\vert \leq \left(
b-a\right) \left( \frac{1}{p+1}\right) ^{\frac{1}{p}}\left( \frac{1}{s+1}%
\right) ^{\frac{1}{q}}  \label{2-12}
\end{equation}%
\begin{equation*}
\times \left\{ 
\begin{array}{cc}
\left[ \varepsilon _{1}^{1/p}(\alpha ,\lambda ,p)C_{f}^{1/q}(\alpha
,q)+\varepsilon _{1}^{1/p}(1-\alpha ,\lambda ,p)D_{f}^{1/q}(\alpha ,q)\right]
, & \alpha \lambda \leq 1-\alpha \leq 1-\lambda \left( 1-\alpha \right) \\ 
\left[ \varepsilon _{1}^{1/p}(\alpha ,\lambda ,p)C_{f}^{1/q}(\alpha
,q)+\varepsilon _{2}^{1/p}(1-\alpha ,\lambda ,p)D_{f}^{1/q}(\alpha ,q)\right]
, & \alpha \lambda \leq 1-\lambda \left( 1-\alpha \right) \leq 1-\alpha \\ 
\left[ \varepsilon _{2}^{1/p}(\alpha ,\lambda ,p)C_{f}^{1/q}(\alpha
,q)+\varepsilon _{1}^{1/p}(1-\alpha ,\lambda ,p)D_{f}^{1/q}(\alpha ,q)\right]
, & 1-\alpha \leq \alpha \lambda \leq 1-\lambda \left( 1-\alpha \right)%
\end{array}%
\right. ,
\end{equation*}%
where 
\begin{eqnarray}
C_{f}(\alpha ,q) &=&\left( 1-\alpha \right) \left[ \left\vert f^{\prime
}\left( \left( 1-\alpha \right) b+\alpha a\right) \right\vert
^{q}+\left\vert f^{\prime }\left( a\right) \right\vert ^{q}\right] ,
\label{2-12a} \\
\ D_{f}(\alpha ,q) &=&\alpha \left[ \left\vert f^{\prime }\left( \left(
1-\alpha \right) b+\alpha a\right) \right\vert ^{q}+\left\vert f^{\prime
}\left( b\right) \right\vert ^{q}\right] ,  \notag
\end{eqnarray}%
\begin{eqnarray}
\varepsilon _{1}(\alpha ,\lambda ,p) &=&\left( \alpha \lambda \right)
^{p+1}+\left( 1-\alpha -\alpha \lambda \right) ^{p+1},\   \label{2-12b} \\
\varepsilon _{2}(\alpha ,\lambda ,p) &=&\left( \alpha \lambda \right)
^{p+1}-\left( \alpha \lambda -1+\alpha \right) ^{p+1},  \notag
\end{eqnarray}%
and $\frac{1}{p}+\frac{1}{q}=1.$
\end{theorem}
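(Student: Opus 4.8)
The plan is to proceed exactly as in Theorem \ref{2.2} up to the point where the two powers separate. Starting from the identity of Lemma \ref{2.1} and the triangle inequality, one gets
\[
|I_f(\lambda,\alpha,a,b)| \leq (b-a)\left[\int_0^{1-\alpha}|t-\alpha\lambda|\,|f'(tb+(1-t)a)|\,dt + \int_{1-\alpha}^1|t-1+\lambda(1-\alpha)|\,|f'(tb+(1-t)a)|\,dt\right].
\]
Since here $q>1$, the natural tool is now H\"older's inequality rather than the power-mean inequality used before. I would apply H\"older with exponents $p,q$ to each of the two integrals separately, placing the weight $|t-\alpha\lambda|$ (respectively $|t-1+\lambda(1-\alpha)|$) entirely in the $p$-factor and the derivative in the $q$-factor. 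This decouples the estimate into two independent computations for each integral: a purely numerical integral of the $p$-th power of the weight, and an integral of $|f'|^q$ handled by $s$-convexity.

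For the derivative factors I would linearly reparametrize each subinterval onto $[0,1]$. On $[0,1-\alpha]$ set $t=(1-\alpha)u$, so that $tb+(1-t)a = u\,c+(1-u)a$ with $c=(1-\alpha)b+\alpha a$; then $s$-convexity of $|f'|^q$ gives $|f'|^q\le u^s|f'(c)|^q+(1-u)^s|f'(a)|^q$, and since $\int_0^1 u^s\,du=\int_0^1(1-u)^s\,du=\tfrac{1}{s+1}$, the integral is bounded by $\tfrac{1}{s+1}C_f(\alpha,q)$. On $[1-\alpha,1]$ set $t=1-\alpha+\alpha v$, so that $tb+(1-t)a = v\,b+(1-v)c$, and the identical argument yields the bound $\tfrac{1}{s+1}D_f(\alpha,q)$. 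These produce the overall factor $(1/(s+1))^{1/q}$ together with the quantities $C_f,D_f$ in the statement.

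For the weight factors I would compute $\int_0^{1-\alpha}|t-\alpha\lambda|^p\,dt$ directly, and treat the second one through the substitution $t\mapsto 1-t$, which (as already recorded before equation (\ref{2-6})) rewrites it as $\int_0^{\alpha}|t-(1-\alpha)\lambda|^p\,dt$, i.e.\ the first integral with $\alpha$ replaced by $1-\alpha$. Each is the integral of $|t-k|^p$ over an interval $[0,U]$ and its value depends on whether the kink $t=k$ lies inside: when $k\le U$ the integral splits and equals $\tfrac{1}{p+1}\big[k^{p+1}+(U-k)^{p+1}\big]$, yielding $\varepsilon_1/(p+1)$, whereas when $k\ge U$ the integrand keeps one sign and it equals $\tfrac{1}{p+1}\big[k^{p+1}-(k-U)^{p+1}\big]$, yielding $\varepsilon_2/(p+1)$. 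Thus the position of $\alpha\lambda$ relative to $1-\alpha$ selects $\varepsilon_1$ or $\varepsilon_2$ for the first weight, and the position of $(1-\alpha)\lambda$ relative to $\alpha$ (equivalently, of $1-\lambda(1-\alpha)$ relative to $1-\alpha$) selects it for the second.

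Combining the two H\"older factors in each integral, summing the two contributions, and factoring out $(1/(p+1))^{1/p}$ and $(1/(s+1))^{1/q}$ gives the asserted bound. The main bookkeeping obstacle will be matching the three hypotheses to the pairs $(\varepsilon_1,\varepsilon_1)$, $(\varepsilon_1,\varepsilon_2)$, $(\varepsilon_2,\varepsilon_1)$: cases (i) and (ii) follow immediately since the chain of inequalities pins down both $\alpha\lambda$ versus $1-\alpha$ and $1-\lambda(1-\alpha)$ versus $1-\alpha$, while case (iii) requires the short observation that $1-\alpha\le\alpha\lambda$ together with $\lambda\le 1$ forces $\lambda(1-\alpha)\le 1-\alpha\le\alpha\lambda\le\alpha$, so that $1-\lambda(1-\alpha)\ge 1-\alpha$ and the second weight is indeed governed by $\varepsilon_1(1-\alpha,\lambda,p)$ rather than $\varepsilon_2$.
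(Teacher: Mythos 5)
Your proposal is correct and follows essentially the same route as the paper's own proof: Lemma \ref{2.1}, then H\"{o}lder's inequality with the weight $|t-\alpha\lambda|$ (resp. $|t-1+\lambda(1-\alpha)|$) placed entirely in the $p$-factor, the bounds $\tfrac{1}{s+1}C_{f}(\alpha,q)$ and $\tfrac{1}{s+1}D_{f}(\alpha,q)$ for the two integrals of $|f^{\prime}|^{q}$, the same closed-form evaluation of $\int_{0}^{U}|t-k|^{p}dt$ in the two sign configurations, and the same matching of the three hypothesis chains to the pairs $(\varepsilon_{1},\varepsilon_{1})$, $(\varepsilon_{1},\varepsilon_{2})$, $(\varepsilon_{2},\varepsilon_{1})$. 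The only (immaterial) difference is that you re-derive the subinterval bounds directly from the definition of $s$-convexity after reparametrizing onto $[0,1]$, whereas the paper cites the right-hand Hermite--Hadamard inequality (\ref{1-2}) applied to $|f^{\prime}|^{q}$ on $[a,(1-\alpha)b+\alpha a]$ and $[(1-\alpha)b+\alpha a,b]$ --- these are the same estimate, and your case (iii) observation, while correct, is already implied by transitivity of the assumed chain $1-\alpha\leq\alpha\lambda\leq1-\lambda(1-\alpha)$.
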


\begin{proof}
From Lemma \ref{2.1} and by H\"{o}lder's integral inequality, we have%
\begin{eqnarray*}
&&\left\vert I_{f}\left( \lambda ,\alpha ,a,b\right) \right\vert \\
&\leq &\left( b-a\right) \left[ \dint\limits_{0}^{1-\alpha }\left\vert
t-\alpha \lambda \right\vert \left\vert f^{\prime }\left( tb+(1-t)a\right)
\right\vert dt+\dint\limits_{1-\alpha }^{1}\left\vert t-1+\lambda \left(
1-\alpha \right) \right\vert \left\vert f^{\prime }\left( tb+(1-t)a\right)
\right\vert dt\right] \\
&\leq &\left( b-a\right) \left\{ \left( \dint\limits_{0}^{1-\alpha
}\left\vert t-\alpha \lambda \right\vert ^{p}dt\right) ^{\frac{1}{p}}\left(
\dint\limits_{0}^{1-\alpha }\left\vert f^{\prime }\left( tb+(1-t)a\right)
\right\vert ^{q}dt\right) ^{\frac{1}{q}}\right.
\end{eqnarray*}%
\begin{equation}
+\left. \left( \dint\limits_{1-\alpha }^{1}\left\vert t-1+\lambda \left(
1-\alpha \right) \right\vert ^{p}dt\right) ^{\frac{1}{p}}\left(
\dint\limits_{1-\alpha }^{1}\left\vert f^{\prime }\left( tb+(1-t)a\right)
\right\vert ^{q}dt\right) ^{\frac{1}{q}}\right\} .  \label{2-13}
\end{equation}%
Since $\left\vert f^{\prime }\right\vert ^{q}$ is $s$-convex on $[a,b],$ for 
$\alpha \in \left[ 0,1\right) $ by the inequality (\ref{1-2}), we get 
\begin{eqnarray}
\dint\limits_{0}^{1-\alpha }\left\vert f^{\prime }\left( tb+(1-t)a\right)
\right\vert ^{q}dt &=&\left( 1-\alpha \right) \left[ \frac{1}{\left(
1-\alpha \right) \left( b-a\right) }\dint\limits_{a}^{\left( 1-\alpha
\right) b+\alpha a}\left\vert f^{\prime }\left( x\right) \right\vert ^{q}dx%
\right]  \notag \\
&\leq &\left( 1-\alpha \right) \left[ \frac{\left\vert f^{\prime }\left(
\left( 1-\alpha \right) b+\alpha a\right) \right\vert ^{q}+\left\vert
f^{\prime }\left( a\right) \right\vert ^{q}}{s+1}\right] .  \label{2-14}
\end{eqnarray}%
The inequality (\ref{2-14}) also holds for $\alpha =1$. Similarly, for $%
\alpha \in \left( 0,1\right] $ by the inequality (\ref{1-2}), we have 
\begin{eqnarray}
\dint\limits_{1-\alpha }^{1}\left\vert f^{\prime }\left( tb+(1-t)a\right)
\right\vert ^{q}dt &=&\alpha \left[ \frac{1}{\alpha \left( b-a\right) }%
\dint\limits_{\left( 1-\alpha \right) b+\alpha a}^{b}\left\vert f^{\prime
}\left( x\right) \right\vert ^{q}dx\right]  \notag \\
&\leq &\alpha \left[ \frac{\left\vert f^{\prime }\left( \left( 1-\alpha
\right) b+\alpha a\right) \right\vert ^{q}+\left\vert f^{\prime }\left(
b\right) \right\vert ^{q}}{s+1}\right] .  \label{2-15}
\end{eqnarray}%
The inequality (\ref{2-15}) also holds for $\alpha =0$. By simple computation%
\begin{equation}
\dint\limits_{0}^{1-\alpha }\left\vert t-\alpha \lambda \right\vert
^{p}dt=\left\{ 
\begin{array}{cc}
\frac{\left( \alpha \lambda \right) ^{p+1}+\left( 1-\alpha -\alpha \lambda
\right) ^{p+1}}{p+1}, & \alpha \lambda \leq 1-\alpha \\ 
\frac{\left( \alpha \lambda \right) ^{p+1}-\left( \alpha \lambda -1+\alpha
\right) ^{p+1}}{p+1}, & \alpha \lambda \geq 1-\alpha%
\end{array}%
\right. ,  \label{2-16}
\end{equation}%
and%
\begin{equation}
\dint\limits_{1-\alpha }^{1}\left\vert t-1+\lambda \left( 1-\alpha \right)
\right\vert ^{p}dt=\left\{ 
\begin{array}{cc}
\frac{\left[ \lambda \left( 1-\alpha \right) \right] ^{p+1}+\left[ \alpha
-\lambda \left( 1-\alpha \right) \right] ^{p+1}}{p+1}, & 1-\alpha \leq
1-\lambda \left( 1-\alpha \right) \\ 
\frac{\left[ \lambda \left( 1-\alpha \right) \right] ^{p+1}-\left[ \lambda
\left( 1-\alpha \right) -\alpha \right] ^{p+1}}{p+1}, & 1-\alpha \geq
1-\lambda \left( 1-\alpha \right)%
\end{array}%
\right. ,  \label{2-17}
\end{equation}%
thus, using (\ref{2-14})-(\ref{2-17}) in (\ref{2-13}), we obtain the
inequality (\ref{2-12}). This completes the proof.
\end{proof}

\begin{corollary}
Under the assumptions of Theorem \ref{2.3} with $s=1$, we have%
\begin{equation*}
\left\vert I_{f}\left( \lambda ,\alpha ,a,b\right) \right\vert \leq \left(
b-a\right) \left( \frac{1}{p+1}\right) ^{\frac{1}{p}}\left( \frac{1}{2}%
\right) ^{\frac{1}{q}}
\end{equation*}%
\begin{equation*}
\times \left\{ 
\begin{array}{cc}
\left[ \varepsilon _{1}^{1/p}(\alpha ,\lambda ,p)C_{f}^{1/q}(\alpha
,q)+\varepsilon _{1}^{1/p}(1-\alpha ,\lambda ,p)D_{f}^{1/q}(\alpha ,q)\right]
, & \alpha \lambda \leq 1-\alpha \leq 1-\lambda \left( 1-\alpha \right) \\ 
\left[ \varepsilon _{1}^{1/p}(\alpha ,\lambda ,p)C_{f}^{1/q}(\alpha
,q)+\varepsilon _{2}^{1/p}(1-\alpha ,\lambda ,p)D_{f}^{1/q}(\alpha ,q)\right]
, & \alpha \lambda \leq 1-\lambda \left( 1-\alpha \right) \leq 1-\alpha \\ 
\left[ \varepsilon _{2}^{1/p}(\alpha ,\lambda ,p)C_{f}^{1/q}(\alpha
,q)+\varepsilon _{1}^{1/p}(1-\alpha ,\lambda ,p)D_{f}^{1/q}(\alpha ,q)\right]
, & 1-\alpha \leq \alpha \lambda \leq 1-\lambda \left( 1-\alpha \right)%
\end{array}%
\right. ,
\end{equation*}%
where $\varepsilon _{1},\ \varepsilon _{2},\ C_{f}$ and $D_{f}$ are defined
as in (\ref{2-12a}).
\end{corollary}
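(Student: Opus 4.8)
The plan is to obtain this corollary as the special case $s=1$ of Theorem~\ref{2.3}, so that essentially no new work is required beyond tracking where the parameter $s$ enters the conclusion. First I would recall that $s$-convexity in the second sense reduces to ordinary convexity exactly when $s=1$; hence the hypothesis that $\left\vert f^{\prime }\right\vert ^{q}$ be $s$-convex becomes the statement that $\left\vert f^{\prime }\right\vert ^{q}$ be convex, which is precisely the situation in which the Hermite--Hadamard bound~(\ref{1-2}) used to derive $C_{f}$ and $D_{f}$ in~(\ref{2-12a}) is valid. Thus the entire apparatus of Theorem~\ref{2.3} applies with no modification.

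Next I would substitute $s=1$ directly into the inequality~(\ref{2-12}). The key observation is that, among the quantities on the right-hand side, only one depends on $s$: the quantities $\varepsilon _{1}$ and $\varepsilon _{2}$ of~(\ref{2-12b}) are functions of $\alpha ,\lambda ,p$ alone, and $C_{f}$ and $D_{f}$ of~(\ref{2-12a}) depend only on $\alpha ,q$ and the values of $f^{\prime }$. The unique $s$-dependent factor is the leading constant $\left( \frac{1}{s+1}\right) ^{1/q}$, which at $s=1$ collapses to $\left( \frac{1}{2}\right) ^{1/q}$.

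Making this substitution leaves every other term in~(\ref{2-12}) unchanged and replaces the constant as indicated, so the three-case bound reproduces itself verbatim with $\left( \frac{1}{2}\right) ^{1/q}$ in place of $\left( \frac{1}{s+1}\right) ^{1/q}$, which is exactly the claimed inequality with its three-case split governed by the ordering of $\alpha \lambda $, $1-\alpha $, and $1-\lambda \left( 1-\alpha \right) $. Since there is no genuine estimation step---only a specialization of an already-proved bound---I do not anticipate any real obstacle; the one point deserving a moment's care is confirming that the definitions~(\ref{2-12a}) and~(\ref{2-12b}) carry no hidden dependence on $s$, which direct inspection confirms.
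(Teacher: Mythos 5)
Your proposal is correct and is exactly the argument the paper intends: the corollary is the immediate specialization of Theorem \ref{2.3} at $s=1$ (where $s$-convexity in the second sense coincides with ordinary convexity), and since $\varepsilon _{1},\varepsilon _{2},C_{f},D_{f}$ carry no $s$-dependence, the only change in (\ref{2-12}) is the factor $\left( \frac{1}{s+1}\right) ^{\frac{1}{q}}$ becoming $\left( \frac{1}{2}\right) ^{\frac{1}{q}}$. No gap; nothing further is needed.
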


\begin{remark}
In Theorem \ref{2.3}, if we take $\alpha =\frac{1}{2}$ and $\lambda =\frac{1%
}{3}$, then we have the following Simpson type inequality 
\begin{equation}
\left\vert \frac{1}{6}\left[ f(a)+4f\left( \frac{a+b}{2}\right) +f(b)\right]
-\frac{1}{b-a}\dint\limits_{a}^{b}f(x)dx\right\vert  \label{2-18}
\end{equation}%
\begin{equation*}
\leq \frac{b-a}{12}\left( \frac{1+2^{p+1}}{3\left( p+1\right) }\right) ^{%
\frac{1}{p}}\left\{ \left( \frac{\left\vert f^{\prime }\left( \frac{a+b}{2}%
\right) \right\vert ^{q}+\left\vert f^{\prime }\left( a\right) \right\vert
^{q}}{s+1}\right) ^{\frac{1}{q}}+\left( \frac{\left\vert f^{\prime }\left( 
\frac{a+b}{2}\right) \right\vert ^{q}+\left\vert f^{\prime }\left( b\right)
\right\vert ^{q}}{s+1}\right) ^{\frac{1}{q}}\right\} ,
\end{equation*}%
which is the same of the inequality (\ref{1-5}).
\end{remark}

\begin{remark}
In Theorem \ref{2.3}, if we take $\alpha =\frac{1}{2}$ and $\lambda =0,$
then we have the following midpoint inequality%
\begin{eqnarray*}
&&\left\vert f\left( \frac{a+b}{2}\right) -\frac{1}{b-a}\dint%
\limits_{a}^{b}f(x)dx\right\vert \\
&\leq &\frac{b-a}{4}\left( \frac{1}{p+1}\right) ^{\frac{1}{p}}\left\{ \left( 
\frac{\left\vert f^{\prime }\left( \frac{a+b}{2}\right) \right\vert
^{q}+\left\vert f^{\prime }\left( a\right) \right\vert ^{q}}{s+1}\right) ^{%
\frac{1}{q}}+\left( \frac{\left\vert f^{\prime }\left( \frac{a+b}{2}\right)
\right\vert ^{q}+\left\vert f^{\prime }\left( b\right) \right\vert ^{q}}{s+1}%
\right) ^{\frac{1}{q}}\right\} .
\end{eqnarray*}%
We note that by inequality 
\begin{equation*}
2^{s-1}\left\vert f^{\prime }\left( \frac{a+b}{2}\right) \right\vert
^{q}\leq \frac{\left\vert f^{\prime }\left( a\right) \right\vert
^{q}+\left\vert f^{\prime }\left( b\right) \right\vert ^{q}}{s+1}
\end{equation*}%
we have%
\begin{eqnarray*}
\left\vert f\left( \frac{a+b}{2}\right) -\frac{1}{b-a}\dint%
\limits_{a}^{b}f(x)dx\right\vert &\leq &\left( \frac{b-a}{4}\right) \left( 
\frac{1}{p+1}\right) ^{\frac{1}{p}}\left( \frac{1}{s+1}\right) ^{\frac{2}{q}}
\\
&&\times \left[ \left( \left( 2^{1-s}+s+1\right) \left\vert f^{\prime
}\left( a\right) \right\vert ^{q}+2^{1-s}\left\vert f^{\prime }\left(
b\right) \right\vert ^{q}\right) ^{\frac{1}{q}}\right. \\
&&+\left. \left( \left( 2^{1-s}+s+1\right) \left\vert f^{\prime }\left(
b\right) \right\vert ^{q}+2^{1-s}\left\vert f^{\prime }\left( a\right)
\right\vert ^{q}\right) ^{\frac{1}{q}}\right] ,
\end{eqnarray*}%
which is the same of the inequality (\ref{1-4a}).
\end{remark}

\begin{remark}
In Theorem \ref{2.3}, if we take $\alpha =\frac{1}{2}$ and $\lambda =1,$
then we have the following trapezoid inequality%
\begin{eqnarray}
&&\left\vert \frac{f\left( a\right) +f\left( b\right) }{2}-\frac{1}{b-a}%
\dint\limits_{a}^{b}f(x)dx\right\vert \leq \frac{b-a}{4}\left( \frac{1}{p+1}%
\right) ^{\frac{1}{p}}  \label{2-19} \\
&&\times \left\{ \left( \frac{\left\vert f^{\prime }\left( \frac{a+b}{2}%
\right) \right\vert ^{q}+\left\vert f^{\prime }\left( a\right) \right\vert
^{q}}{s+1}\right) ^{\frac{1}{q}}+\left( \frac{\left\vert f^{\prime }\left( 
\frac{a+b}{2}\right) \right\vert ^{q}+\left\vert f^{\prime }\left( b\right)
\right\vert ^{q}}{s+1}\right) ^{\frac{1}{q}}\right\} .  \notag
\end{eqnarray}%
We note that the obtained midpoint inequality (\ref{2-19}) is better than
the inequality (\ref{1-6}).
\end{remark}

\begin{theorem}
\label{2.5}Let $f:I\subseteq \mathbb{R\rightarrow R}$ be a differentiable
mapping on $I^{\circ }$ such that $f^{\prime }\in L[a,b]$, where $a,b\in
I^{\circ }$ with $a<b$ and $\alpha ,\lambda \in \left[ 0,1\right] $. If $%
\left\vert f^{\prime }\right\vert ^{q}$ is $s$-concave on $[a,b]$, for some
fixed $s\in (0,1]$ and $q>1,$ then the following inequality holds:%
\begin{equation}
\left\vert I_{f}\left( \lambda ,\alpha ,a,b\right) \right\vert \leq \left(
b-a\right) 2^{\frac{s-1}{q}}\left( \frac{1}{p+1}\right) ^{\frac{1}{p}}
\label{2-20}
\end{equation}%
\begin{equation*}
\times \left\{ 
\begin{array}{cc}
\left[ \varepsilon _{1}^{1/p}(\alpha ,\lambda ,p)E_{f}^{1/q}(\alpha
,q)+\varepsilon _{1}^{1/p}(1-\alpha ,\lambda ,p)F_{f}^{1/q}(\alpha ,q)\right]
, & \alpha \lambda \leq 1-\alpha \leq 1-\lambda \left( 1-\alpha \right) \\ 
\left[ \varepsilon _{1}^{1/p}(\alpha ,\lambda ,p)E_{f}^{1/q}(\alpha
,q)+\varepsilon _{2}^{1/p}(1-\alpha ,\lambda ,p)F_{f}^{1/q}(\alpha ,q)\right]
, & \alpha \lambda \leq 1-\lambda \left( 1-\alpha \right) \leq 1-\alpha \\ 
\left[ \varepsilon _{2}^{1/p}(\alpha ,\lambda ,p)E_{f}^{1/q}(\alpha
,q)+\varepsilon _{1}^{1/p}(1-\alpha ,\lambda ,p)F_{f}^{1/q}(\alpha ,q)\right]
, & 1-\alpha \leq \alpha \lambda \leq 1-\lambda \left( 1-\alpha \right)%
\end{array}%
\right. ,
\end{equation*}%
where%
\begin{equation*}
E_{f}(\alpha ,q)=\left( 1-\alpha \right) \left\vert f^{\prime }\left( \frac{%
\left( 1-\alpha \right) b+\left( 1+\alpha \right) a}{2}\right) \right\vert
^{q},\ F_{f}(\alpha ,q)=\alpha \left\vert f^{\prime }\left( \frac{\left(
2-\alpha \right) b+\alpha a}{2}\right) \right\vert ^{q},
\end{equation*}%
and $\varepsilon _{1},\ \varepsilon _{2}$ are defined as in (\ref{2-12a}).
\end{theorem}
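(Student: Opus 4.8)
The plan is to mirror the proof of Theorem \ref{2.3} almost verbatim, replacing the $s$-convex estimate by the $s$-concave version of the Hermite--Hadamard inequality. Starting from the identity in Lemma \ref{2.1} and applying H\"{o}lder's inequality with conjugate exponents $p,q$ exactly as in (\ref{2-13}), I would split $\left\vert I_{f}(\lambda ,\alpha ,a,b)\right\vert $ into two terms, each a product of a power-integral of the kernel and a factor $\left( \int \left\vert f^{\prime }\right\vert ^{q}\right) ^{1/q}$ taken over $[0,1-\alpha ]$ and $[1-\alpha ,1]$ respectively.

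The kernel integrals $\int_{0}^{1-\alpha }\left\vert t-\alpha \lambda \right\vert ^{p}\,dt$ and $\int_{1-\alpha }^{1}\left\vert t-1+\lambda (1-\alpha )\right\vert ^{p}\,dt$ are identical to the ones already computed in (\ref{2-16}) and (\ref{2-17}); they produce the factor $\left( 1/(p+1)\right) ^{1/p}$ together with $\varepsilon _{1}^{1/p}$ or $\varepsilon _{2}^{1/p}$, and the three cases in the statement are dictated precisely by the sign conditions $\alpha \lambda \lessgtr 1-\alpha $ and $1-\lambda (1-\alpha )\lessgtr 1-\alpha $ recorded there. No new work is therefore needed for these terms, and they may simply be quoted.

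The genuinely new ingredient is the treatment of $\int \left\vert f^{\prime }\right\vert ^{q}$. Since $\left\vert f^{\prime }\right\vert ^{q}$ is $s$-concave, the first (left-hand) inequality of (\ref{1-2}) reverses, giving on any subinterval $[c,d]$ the bound $\frac{1}{d-c}\int_{c}^{d}\left\vert f^{\prime }(x)\right\vert ^{q}\,dx\leq 2^{s-1}\left\vert f^{\prime }\left( \frac{c+d}{2}\right) \right\vert ^{q}$. Performing the change of variable $x=tb+(1-t)a$ exactly as in (\ref{2-14})--(\ref{2-15}), the first factor reads $\int_{0}^{1-\alpha }\left\vert f^{\prime }(tb+(1-t)a)\right\vert ^{q}dt=(1-\alpha )\cdot \frac{1}{(1-\alpha )(b-a)}\int_{a}^{(1-\alpha )b+\alpha a}\left\vert f^{\prime }(x)\right\vert ^{q}dx$, whose interval of integration has midpoint $\frac{(1-\alpha )b+(1+\alpha )a}{2}$; the reversed inequality then yields $(1-\alpha )2^{s-1}\left\vert f^{\prime }\left( \frac{(1-\alpha )b+(1+\alpha )a}{2}\right) \right\vert ^{q}=2^{s-1}E_{f}(\alpha ,q)$. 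Likewise the second factor, over $[1-\alpha ,1]$, corresponds to the interval $[(1-\alpha )b+\alpha a,\,b]$ with midpoint $\frac{(2-\alpha )b+\alpha a}{2}$, and the bound becomes $2^{s-1}F_{f}(\alpha ,q)$. Taking $q$-th roots pulls $2^{s-1}$ out as $2^{(s-1)/q}$, which is exactly the prefactor appearing in (\ref{2-20}).

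Assembling these two observations into (\ref{2-13}) and reading off the three cases gives the claimed inequality. The only point requiring genuine care — and the sole place where the $s$-concavity hypothesis enters — is invoking the correct (reversed) direction of Hermite--Hadamard together with the correct identification of the two midpoints $\frac{(1-\alpha )b+(1+\alpha )a}{2}$ and $\frac{(2-\alpha )b+\alpha a}{2}$; everything else is a transcription of the computations already carried out for Theorem \ref{2.3}.
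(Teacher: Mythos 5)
Your proposal is correct and follows essentially the same route as the paper: Hölder's inequality applied to the identity of Lemma \ref{2.1} exactly as in (\ref{2-13}), the kernel integrals quoted from (\ref{2-16})--(\ref{2-17}), and the reversed (s-concave) Hermite--Hadamard inequality applied on the subintervals $[a,(1-\alpha)b+\alpha a]$ and $[(1-\alpha)b+\alpha a,b]$ to produce $2^{s-1}E_{f}(\alpha,q)$ and $2^{s-1}F_{f}(\alpha,q)$, which is precisely the paper's argument via (\ref{2-21}) and (\ref{2-22}). The only cosmetic difference is that the paper explicitly notes the degenerate cases $\alpha=1$ and $\alpha=0$ (where one divides by $1-\alpha$ or $\alpha$), but this is immediate since the corresponding terms vanish.
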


\begin{proof}
We proceed similarly as in the proof Theorem \ref{2.3}. Since $\left\vert
f^{\prime }\right\vert ^{q}$ is $s-$concave on $[a,b],$ for $\alpha \in %
\left[ 0,1\right) $ by the inequality (\ref{1-2}), we get%
\begin{eqnarray}
\dint\limits_{0}^{1-\alpha }\left\vert f^{\prime }\left( tb+(1-t)a\right)
\right\vert ^{q}dt &=&\left( 1-\alpha \right) \left[ \frac{1}{\left(
1-\alpha \right) \left( b-a\right) }\dint\limits_{a}^{\left( 1-\alpha
\right) b+\alpha a}\left\vert f^{\prime }\left( x\right) \right\vert ^{q}dx%
\right]  \notag \\
&\leq &2^{s-1}\left( 1-\alpha \right) \left\vert f^{\prime }\left( \frac{%
\left( 1-\alpha \right) b+\left( 1+\alpha \right) a}{2}\right) \right\vert
^{q}  \label{2-21}
\end{eqnarray}%
The inequality (\ref{2-21}) also holds for $\alpha =1$. Similarly, for $%
\alpha \in \left( 0,1\right] $ by the inequality (\ref{1-2}), we have%
\begin{eqnarray}
\dint\limits_{1-\alpha }^{1}\left\vert f^{\prime }\left( tb+(1-t)a\right)
\right\vert ^{q}dt &=&\alpha \left[ \frac{1}{\alpha \left( b-a\right) }%
\dint\limits_{\left( 1-\alpha \right) b+\alpha a}^{b}\left\vert f^{\prime
}\left( x\right) \right\vert ^{q}dx\right]  \notag \\
&\leq &2^{s-1}\alpha \left\vert f^{\prime }\left( \frac{\left( 2-\alpha
\right) b+\alpha a}{2}\right) \right\vert ^{q}  \label{2-22}
\end{eqnarray}%
The inequality (\ref{2-22}) also holds for $\alpha =0$. Thus, using (\ref%
{2-16}),(\ref{2-17}),(\ref{2-21})and (\ref{2-22}) in (\ref{2-13}), we obtain
the inequality (\ref{2-20}). This completes the proof.
\end{proof}

\begin{corollary}
\label{2.7}Under the assumptions of Theorem \ref{2.5} with $s=1$, we have%
\begin{equation*}
\left\vert I_{f}\left( \lambda ,\alpha ,a,b\right) \right\vert \leq \left(
b-a\right) \left( \frac{1}{p+1}\right) ^{\frac{1}{p}}
\end{equation*}%
\begin{equation*}
\times \left\{ 
\begin{array}{cc}
\left[ \varepsilon _{1}^{1/p}(\alpha ,\lambda ,p)E_{f}^{1/q}(\alpha
,q)+\varepsilon _{1}^{1/p}(1-\alpha ,\lambda ,p)F_{f}^{1/q}(\alpha ,q)\right]
, & \alpha \lambda \leq 1-\alpha \leq 1-\lambda \left( 1-\alpha \right) \\ 
\left[ \varepsilon _{1}^{1/p}(\alpha ,\lambda ,p)E_{f}^{1/q}(\alpha
,q)+\varepsilon _{2}^{1/p}(1-\alpha ,\lambda ,p)F_{f}^{1/q}(\alpha ,q)\right]
, & \alpha \lambda \leq 1-\lambda \left( 1-\alpha \right) \leq 1-\alpha \\ 
\left[ \varepsilon _{2}^{1/p}(\alpha ,\lambda ,p)E_{f}^{1/q}(\alpha
,q)+\varepsilon _{1}^{1/p}(1-\alpha ,\lambda ,p)F_{f}^{1/q}(\alpha ,q)\right]
, & 1-\alpha \leq \alpha \lambda \leq 1-\lambda \left( 1-\alpha \right)%
\end{array}%
\right. ,
\end{equation*}%
where $\varepsilon _{1},\ \varepsilon _{2},\ E_{f}$ and $F_{f}$ are defined
as in Theorem \ref{2.5}.
\end{corollary}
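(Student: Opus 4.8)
The plan is to derive this corollary directly from Theorem \ref{2.5} by specializing to $s=1$; the hypotheses of the corollary are precisely those of the theorem with this extra restriction, so no fresh estimates are needed. I would simply track where the parameter $s$ appears in the bound (\ref{2-20}) and then set $s=1$.

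The key observation is that $s$ enters the right-hand side of (\ref{2-20}) only through the scalar prefactor $2^{(s-1)/q}$. At $s=1$ this becomes $2^{0}=1$, so the prefactor disappears and the estimate collapses to $(b-a)\left(\tfrac{1}{p+1}\right)^{1/p}$ multiplied by the three-branch bracket. I would then confirm that the remaining data are $s$-independent: the exponent-weights $\varepsilon_{1}(\alpha,\lambda,p)$ and $\varepsilon_{2}(\alpha,\lambda,p)$ are defined in (\ref{2-12b}) solely in terms of $\alpha,\lambda,p$, while $E_{f}(\alpha,q)$ and $F_{f}(\alpha,q)$ from Theorem \ref{2.5} depend only on $\alpha$ and on the values of $\left\vert f^{\prime }\right\vert^{q}$ at fixed nodes. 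Consequently every case of the bracket survives the substitution unchanged, and reading off (\ref{2-20}) with $s=1$ gives the claimed inequality verbatim.

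Since this is a pure specialization, I anticipate no genuine difficulty. The only point worth checking is the bookkeeping that none of $\varepsilon_{1}$, $\varepsilon_{2}$, $E_{f}$, $F_{f}$ carries a concealed dependence on $s$, so that the single simplification $2^{(s-1)/q}\to 1$ is indeed the entire content of the corollary.
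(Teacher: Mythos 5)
Your proposal is correct and is exactly the paper's (implicit) argument: the corollary is a pure specialization of Theorem \ref{2.5}, since $s$ enters the bound (\ref{2-20}) only through the factor $2^{\frac{s-1}{q}}$, which equals $1$ at $s=1$, while $\varepsilon _{1}$, $\varepsilon _{2}$, $E_{f}$, $F_{f}$ carry no dependence on $s$. Nothing further is needed.
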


\begin{remark}
In Theorem \ref{2.5}, if we take $\alpha =\frac{1}{2}$ and $\lambda =1,$
then we have the following trapezoid inequality%
\begin{eqnarray*}
&&\left\vert \frac{f\left( a\right) +f\left( b\right) }{2}-\frac{1}{b-a}%
\dint\limits_{a}^{b}f(x)dx\right\vert \\
&\leq &\frac{b-a}{4}\left( \frac{1}{p+1}\right) ^{\frac{1}{p}}\times \left( 
\frac{1}{2}\right) ^{\frac{1-s}{q}}\left[ \left\vert f^{\prime }\left( \frac{%
3b+a}{4}\right) \right\vert +\left\vert f^{\prime }\left( \frac{3a+b}{4}%
\right) \right\vert \right]
\end{eqnarray*}%
which is the same of the inequality in \cite[Theorem 8 (i)]{P10}.
\end{remark}

\begin{remark}
In Theorem \ref{2.5}, if we take $\alpha =\frac{1}{2}$ and $\lambda =0,$
then we have the following midpoint inequality%
\begin{eqnarray*}
&&\left\vert f\left( \frac{a+b}{2}\right) -\frac{1}{b-a}\dint%
\limits_{a}^{b}f(x)dx\right\vert \\
&\leq &\frac{b-a}{4}\left( \frac{1}{p+1}\right) ^{\frac{1}{p}}\times \left( 
\frac{1}{2}\right) ^{\frac{1-s}{q}}\left[ \left\vert f^{\prime }\left( \frac{%
3b+a}{4}\right) \right\vert +\left\vert f^{\prime }\left( \frac{3a+b}{4}%
\right) \right\vert \right]
\end{eqnarray*}%
which is the same of the inequality in \cite[Theorem 8 (ii)]{P10}.
\end{remark}

\begin{remark}
In Theorem \ref{2.5}, if we take $\alpha =\frac{1}{2}$ and $\lambda =1,$
then we have the following trapezoid inequality%
\begin{eqnarray}
&&\left\vert \frac{f\left( a\right) +f\left( b\right) }{2}-\frac{1}{b-a}%
\dint\limits_{a}^{b}f(x)dx\right\vert  \label{2-23} \\
&\leq &\frac{b-a}{4}\left( \frac{1}{p+1}\right) ^{\frac{1}{p}}\left[
\left\vert f^{\prime }\left( \frac{3b+a}{4}\right) \right\vert +\left\vert
f^{\prime }\left( \frac{3a+b}{4}\right) \right\vert \right]  \notag
\end{eqnarray}%
which is the same of the inequality in \cite[Theorem 2]{KBOP07}.
\end{remark}

\begin{remark}
In Theorem \ref{2.5}, if we take $\alpha =\frac{1}{2}$ and $\lambda =0,$
then we have the following trapezoid inequality%
\begin{eqnarray}
&&\left\vert f\left( \frac{a+b}{2}\right) -\frac{1}{b-a}\dint%
\limits_{a}^{b}f(x)dx\right\vert  \label{2-24} \\
&\leq &\frac{b-a}{4}\left( \frac{1}{p+1}\right) ^{\frac{1}{p}}\left[
\left\vert f^{\prime }\left( \frac{3b+a}{4}\right) \right\vert +\left\vert
f^{\prime }\left( \frac{3a+b}{4}\right) \right\vert \right]  \notag
\end{eqnarray}%
which is the same of the inequality in \cite[Theorem 2.5]{ADK11}.
\end{remark}

\begin{remark}
In Theorem \ref{2.5}, since $\left\vert f^{\prime }\right\vert ^{q},\ q>1,$
is concave on $\left[ a,b\right] ,$ using the power mean inequality, we have%
\begin{eqnarray*}
\left\vert f^{\prime }\left( \lambda x+\left( 1-\lambda \right) y\right)
\right\vert ^{q} &\geq &\lambda \left\vert f^{\prime }\left( x\right)
\right\vert ^{q}+\left( 1-\lambda \right) \left\vert f^{\prime }\left(
y\right) \right\vert ^{q} \\
&\geq &\left( \lambda \left\vert f^{\prime }\left( x\right) \right\vert
+\left( 1-\lambda \right) \left\vert f^{\prime }\left( y\right) \right\vert
\right) ^{q},
\end{eqnarray*}%
$\forall x,y\in \left[ a,b\right] $ and $\lambda \in \left[ 0,1\right] .$
Hence%
\begin{equation*}
\left\vert f^{\prime }\left( \lambda x+\left( 1-\lambda \right) y\right)
\right\vert \geq \lambda \left\vert f^{\prime }\left( x\right) \right\vert
+\left( 1-\lambda \right) \left\vert f^{\prime }\left( y\right) \right\vert
\end{equation*}%
so $\left\vert f^{\prime }\right\vert $ is also concave. Then by the
inequality (\ref{1-1}), we have%
\begin{equation}
\left\vert f^{\prime }\left( \frac{3b+a}{4}\right) \right\vert +\left\vert
f^{\prime }\left( \frac{3a+b}{4}\right) \right\vert \leq 2\left\vert
f^{\prime }\left( \frac{a+b}{2}\right) \right\vert .  \label{2-25}
\end{equation}%
Thus, using the inequality (\ref{2-25}) in (\ref{2-23}) and (\ref{2-24}) we
get%
\begin{eqnarray*}
\left\vert \frac{f\left( a\right) +f\left( b\right) }{2}-\frac{1}{b-a}%
\dint\limits_{a}^{b}f(x)dx\right\vert &\leq &\frac{b-a}{2}\left( \frac{1}{p+1%
}\right) ^{\frac{1}{p}}\left\vert f^{\prime }\left( \frac{a+b}{2}\right)
\right\vert , \\
\left\vert f\left( \frac{a+b}{2}\right) -\frac{1}{b-a}\dint%
\limits_{a}^{b}f(x)dx\right\vert &\leq &\frac{b-a}{2}\left( \frac{1}{p+1}%
\right) ^{\frac{1}{p}}\left\vert f^{\prime }\left( \frac{a+b}{2}\right)
\right\vert .
\end{eqnarray*}
\end{remark}

\section{Some applications for special means}

Let us recall the following special means of arbitrary real numbers $a,b$
with $a\neq b$ and $\alpha \in \left[ 0,1\right] :$

\begin{enumerate}
\item The weighted arithmetic mean%
\begin{equation*}
A_{\alpha }\left( a,b\right) :=\alpha a+(1-\alpha )b,~a,b\in 
%TCIMACRO{\U{211d} }%
%BeginExpansion
\mathbb{R}
%EndExpansion
.
\end{equation*}

\item The unweighted arithmetic mean%
\begin{equation*}
A\left( a,b\right) :=\frac{a+b}{2},~a,b\in 
%TCIMACRO{\U{211d} }%
%BeginExpansion
\mathbb{R}
%EndExpansion
.
\end{equation*}

\item Then $p-$Logarithmic mean%
\begin{equation*}
L_{p}\left( a,b\right) :=\ \left( \frac{b^{p+1}-a^{p+1}}{(p+1)(b-a)}\right)
^{\frac{1}{p}}\ ,\ p\in 
%TCIMACRO{\U{211d} }%
%BeginExpansion
\mathbb{R}
%EndExpansion
\backslash \left\{ -1,0\right\} ,\ a,b>0.
\end{equation*}
\end{enumerate}

From known Example 1 in \cite{HM94}, we may find that for any \ $s\in \left(
0,1\right) $ and $\beta >0,$ $f:\left[ 0,\infty \right) \rightarrow \left[
0,\infty \right) $, $f(t)=\beta t^{s},\ f\in K_{s}^{2}.$

Now, using the resuls of Section 2, some new inequalities are derived for
the above means.

\begin{proposition}
Let $a,b\in 
%TCIMACRO{\U{211d} }%
%BeginExpansion
\mathbb{R}
%EndExpansion
$ with $0<a<b,\ q\geq 1$ and \ $s\in \left( 0,\frac{1}{q}\right) $. Then
\end{proposition}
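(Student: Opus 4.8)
The plan is to apply Theorem~\ref{2.2} to a carefully chosen power function and then recognize the three building blocks $I_f$, $f'(a)$, $f'(b)$ as the special means in the statement. The natural choice is $f:[0,\infty)\to[0,\infty)$, $f(t)=t^{s+1}$, which is differentiable with $f'(t)=(s+1)t^{s}$, so that $f'\in L[a,b]$ on any $[a,b]\subset(0,\infty)$; since $0<a<b$ the interval sits in the interior, as required.

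The one genuinely non-routine step is verifying the hypothesis of Theorem~\ref{2.2}, namely that $\left\vert f'\right\vert^{q}$ is $s$-convex on $[a,b]$. Here $\left\vert f'(t)\right\vert^{q}=(s+1)^{q}t^{sq}$, so it suffices to show $t\mapsto t^{sq}\in K_s^2$. The role of the assumption $s\in(0,1/q)$ is exactly to guarantee $sq\in(0,1)$, so that the Example of \cite{HM94} quoted above applies and gives $t^{sq}\in K_{sq}^{2}$. I would then invoke the elementary monotonicity $K_{s_1}^{2}\subseteq K_{s_2}^{2}$ whenever $0<s_2\le s_1$ (which holds because $\alpha^{s_2}\ge\alpha^{s_1}$ for $\alpha\in[0,1]$ together with non-negativity of the functions); since $q\ge1$ forces $s\le sq$, we conclude $t^{sq}\in K_s^2$, and multiplying by the positive constant $(s+1)^q$ preserves $s$-convexity. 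This is the step I expect to be the crux — everything after it is bookkeeping.

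With admissibility established, I would carry out the identifications. Direct integration gives $\frac{1}{b-a}\int_a^b t^{s+1}\,dt=L_{s+1}^{s+1}(a,b)$, while $f(a)=a^{s+1}$, $f(b)=b^{s+1}$ and $f(\alpha a+(1-\alpha)b)=A_\alpha^{s+1}(a,b)$, so that
\[
I_f(\lambda,\alpha,a,b)=\lambda A_\alpha(a^{s+1},b^{s+1})+(1-\lambda)A_\alpha^{s+1}(a,b)-L_{s+1}^{s+1}(a,b).
\]
Finally, since $a,b>0$ we have $\left\vert f'(a)\right\vert=(s+1)a^{s}$ and $\left\vert f'(b)\right\vert=(s+1)b^{s}$; substituting $\left\vert f'(b)\right\vert^{q}=(s+1)^{q}b^{sq}$ and $\left\vert f'(a)\right\vert^{q}=(s+1)^{q}a^{sq}$ into the three cases of Theorem~\ref{2.2}, and pulling the factor $(s+1)$ out of each $q$-th root, yields the asserted bound on $\left\vert I_f\right\vert$ in the three regimes $\alpha\lambda\le1-\alpha\le1-\lambda(1-\alpha)$, $\alpha\lambda\le1-\lambda(1-\alpha)\le1-\alpha$, and $1-\alpha\le\alpha\lambda\le1-\lambda(1-\alpha)$, with the coefficients $\gamma_i(\cdot,\cdot)$ and $c_j(\cdot,\cdot,s)$ carried over unchanged from Theorem~\ref{2.2}.
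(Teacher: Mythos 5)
Your proposal is correct and takes essentially the same route as the paper: apply Theorem \ref{2.2} to $f(t)=t^{s+1}$, note $\left\vert f^{\prime }(t)\right\vert ^{q}=(s+1)^{q}t^{qs}$, and identify the resulting terms with $A_{\alpha }$, $A_{\alpha }^{s+1}$ and $L_{s+1}^{s+1}$, pulling the factor $(s+1)$ out of the $q$-th roots. If anything, you are more careful than the paper, which simply asserts that $(s+1)^{q}t^{qs}$ is $s$-convex ``since $qs\in (0,1)$ and $(s+1)^{q}>0$''; your explicit appeal to the inclusion $K_{qs}^{2}\subseteq K_{s}^{2}$ (valid because $q\geq 1$ gives $s\leq qs$, and $\alpha ^{qs}\leq \alpha ^{s}$ on $[0,1]$) supplies precisely the justification the paper leaves implicit.
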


\begin{theorem}
(i) for $\alpha \lambda \leq 1-\alpha \leq 1-\lambda \left( 1-\alpha \right) 
$ we have%
\begin{eqnarray*}
&&\left\vert \lambda A_{\alpha }\left( a^{s+1},b^{s+1}\right) +\left(
1-\lambda \right) A_{\alpha }^{s+1}\left( a,b\right) -L_{s+1}^{s+1}\left(
a,b\right) \right\vert \\
&\leq &\left( b-a\right) \left( s+1\right) \left[ \gamma _{2}^{1-\frac{1}{q}%
}(\alpha ,\lambda )\left( c_{1}(\alpha ,\lambda ,s)b^{sq}+c_{2}(\alpha
,\lambda ,s)a^{sq}\right) ^{\frac{1}{q}}\right. \\
&&+\left. \gamma _{2}^{1-\frac{1}{q}}(1-\alpha ,\lambda )\left(
c_{2}(1-\alpha ,\lambda ,s)b^{sq}+c_{1}(1-\alpha ,\lambda ,s)a^{sq}\right) ^{%
\frac{1}{q}}\right] ,
\end{eqnarray*}

(ii) for $\alpha \lambda \leq 1-\lambda \left( 1-\alpha \right) \leq
1-\alpha $ we have%
\begin{eqnarray*}
&&\left\vert \lambda A_{\alpha }\left( a^{s+1},b^{s+1}\right) +\left(
1-\lambda \right) A_{\alpha }^{s+1}\left( a,b\right) -L_{s+1}^{s+1}\left(
a,b\right) \right\vert \\
&\leq &\left( b-a\right) \left( s+1\right) \left[ \gamma _{2}^{1-\frac{1}{q}%
}(\alpha ,\lambda )\left( c_{1}(\alpha ,\lambda ,s)b^{sq}+c_{2}(\alpha
,\lambda ,s)a^{sq}\right) ^{\frac{1}{q}}\right. \\
&&+\left. \gamma _{1}^{1-\frac{1}{q}}(1-\alpha ,\lambda )\left(
c_{4}(1-\alpha ,\lambda ,s)b^{sq}+c_{3}(1-\alpha ,\lambda ,s)a^{sq}\right) ^{%
\frac{1}{q}}\right] ,
\end{eqnarray*}

(iii) for $1-\alpha \leq \alpha \lambda \leq 1-\lambda \left( 1-\alpha
\right) $ we have%
\begin{eqnarray*}
&&\left\vert \lambda A_{\alpha }\left( a^{s+1},b^{s+1}\right) +\left(
1-\lambda \right) A_{\alpha }^{s+1}\left( a,b\right) -L_{s+1}^{s+1}\left(
a,b\right) \right\vert \\
&\leq &\left( b-a\right) \left( s+1\right) \left[ \gamma _{1}^{1-\frac{1}{q}%
}(\alpha ,\lambda )\left( c_{3}(\alpha ,\lambda ,s)b^{sq}+c_{4}(\alpha
,\lambda ,s)a^{sq}\right) ^{\frac{1}{q}}\right. \\
&&+\left. \gamma _{2}^{1-\frac{1}{q}}(1-\alpha ,\lambda )\left(
c_{2}(1-\alpha ,\lambda ,s)b^{sq}+c_{1}(1-\alpha ,\lambda ,s)a^{sq}\right) ^{%
\frac{1}{q}}\right]
\end{eqnarray*}%
where $\gamma _{1},\ \gamma _{2},\ c_{1},\ \ c_{2},\ c_{3},\ c_{4}$ numbers
are defined as in Theorem \ref{2.2}.
\end{theorem}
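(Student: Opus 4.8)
The plan is to apply Theorem \ref{2.2} directly to the power function $f(t)=t^{s+1}$ on $[a,b]$ with $0<a<b$, and then to translate the abstract quantity $I_f(\lambda,\alpha,a,b)$ together with the derivative data $|f'(a)|^q,|f'(b)|^q$ into the language of the special means. Everything beyond this identification is a matter of bookkeeping, since the three case conditions on $\alpha\lambda$, $1-\alpha$ and $1-\lambda(1-\alpha)$ are literally the same in both statements.

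First I would record the elementary identities that make $f(t)=t^{s+1}$ the correct test function. Since $f(a)=a^{s+1}$ and $f(b)=b^{s+1}$, the boundary contribution is $\lambda(\alpha f(a)+(1-\alpha)f(b))=\lambda A_\alpha(a^{s+1},b^{s+1})$; since $f(\alpha a+(1-\alpha)b)=(\alpha a+(1-\alpha)b)^{s+1}=A_\alpha^{s+1}(a,b)$, the interior term is $(1-\lambda)A_\alpha^{s+1}(a,b)$; and a direct integration gives $\frac{1}{b-a}\int_a^b x^{s+1}\,dx=\frac{b^{s+2}-a^{s+2}}{(s+2)(b-a)}=L_{s+1}^{s+1}(a,b)$. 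Hence $I_f(\lambda,\alpha,a,b)$ is exactly the expression appearing inside the absolute value on the left-hand side of each of the three inequalities.

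Next I would verify the hypothesis of Theorem \ref{2.2}, namely that $|f'|^q$ is $s$-convex. We have $f'(t)=(s+1)t^s$, so $|f'(t)|^q=(s+1)^q t^{sq}$. The assumption $s\in(0,1/q)$ forces $sq\in(0,1)$, so by the example quoted from \cite{HM94} the map $t\mapsto t^{sq}$, and hence its positive multiple $(s+1)^q t^{sq}$, lies in $K^2_{sq}$, i.e. is $(sq)$-convex. The point that needs care is the monotonicity of the convexity index: for $\alpha\in[0,1]$ and $r'\le r$ one has $\alpha^{r'}\ge\alpha^r$ and $(1-\alpha)^{r'}\ge(1-\alpha)^r$, so for a nonnegative function $(sq)$-convexity implies $r'$-convexity for every $r'\le sq$. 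Since $q\ge 1$ gives $s\le sq$, the function $|f'|^q$ is indeed $s$-convex; this single step is where both hypotheses $q\ge1$ and $s<1/q$ are used.

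Finally I would substitute into the three branches of Theorem \ref{2.2}. Because $|f'(b)|^q=(s+1)^q b^{sq}$ and $|f'(a)|^q=(s+1)^q a^{sq}$, the constant factor $(s+1)^q$ can be pulled out of each inner bracket and emerges as $(s+1)$ after the $q$-th root is taken; combined with the prefactor $(b-a)$ this produces the common factor $(b-a)(s+1)$. Matching each case of Theorem \ref{2.2} to the corresponding case here then reproduces all three inequalities verbatim, with the constants $\gamma_1,\gamma_2,c_1,c_2,c_3,c_4$ carried over unchanged. I expect no genuine obstacle; the only conceptual step is the convexity-index monotonicity used to confirm the $s$-convexity of $|f'|^q$, and the remainder is a routine verification of the mean identities.
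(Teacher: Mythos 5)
Your proposal is correct and takes essentially the same approach as the paper: both apply Theorem \ref{2.2} to $f(t)=t^{s+1}$, identify $I_{f}(\lambda ,\alpha ,a,b)$ with $\lambda A_{\alpha }(a^{s+1},b^{s+1})+(1-\lambda )A_{\alpha }^{s+1}(a,b)-L_{s+1}^{s+1}(a,b)$, and pull the factor $(s+1)^{q}$ out through the $q$-th root to get the prefactor $(b-a)(s+1)$. The only difference is that you are more explicit on the one delicate hypothesis: the paper simply asserts that $|f'(t)|^{q}=(s+1)^{q}t^{qs}$ is $s$-convex because $qs\in (0,1)$ and $(s+1)^{q}>0$, whereas the example from \cite{HM94} literally gives $(qs)$-convexity, and you supply the missing index-monotonicity step (since $q\geq 1$ gives $s\leq qs$ and $\alpha ^{qs}\leq \alpha ^{s}$ for $\alpha \in [0,1]$, a nonnegative $(qs)$-convex function is also $s$-convex), which is exactly what justifies using the constants $c_{i}(\alpha ,\lambda ,s)$ rather than $c_{i}(\alpha ,\lambda ,qs)$.
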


\begin{proof}
The assertion follows from applied the inequalities in Theorem \ref{2.2} to
the function $f(t)=t^{s+1}$,$\ t\in \left[ a,b\right] $ and $s\in \left( 0,%
\frac{1}{q}\right) ,$ which implies that $f^{\prime }(t)=(s+1)t^{s}$,$\ \
t\in \left[ a,b\right] $ and $\left\vert f^{\prime }(t)\right\vert
^{q}=(s+1)^{q}t^{qs}$,$\ \ t\in \left[ a,b\right] $ is a $s$-convex function
in the second sense since $qs\in \left( 0,1\right) $ and $(s+1)^{q}>0.$
\end{proof}

\begin{proposition}
Let $a,b\in 
%TCIMACRO{\U{211d} }%
%BeginExpansion
\mathbb{R}
%EndExpansion
$ with $0<a<b,\ p,q>1,\ \frac{1}{p}+\frac{1}{q}=1$ and \ $s\in \left( 0,%
\frac{1}{q}\right) $we have the following inequality:%
\begin{equation*}
\left\vert \lambda A_{\alpha }\left( a^{s+1},b^{s+1}\right) +\left(
1-\lambda \right) A_{\alpha }^{s+1}\left( a,b\right) -L_{s+1}^{s+1}\left(
a,b\right) \right\vert \leq \left( b-a\right) \left( \frac{1}{p+1}\right) ^{%
\frac{1}{p}}\left( s+1\right) ^{1-\frac{1}{q}}
\end{equation*}%
\begin{equation*}
\times \left\{ 
\begin{array}{cc}
\left[ \varepsilon _{1}^{1/p}(\alpha ,\lambda ,p)C_{s}^{1/q}(\alpha
,q)+\varepsilon _{1}^{1/p}(1-\alpha ,\lambda ,p)D_{s}^{1/q}(\alpha ,q)\right]
, & \alpha \lambda \leq 1-\alpha \leq 1-\lambda \left( 1-\alpha \right) \\ 
\left[ \varepsilon _{1}^{1/p}(\alpha ,\lambda ,p)C_{s}^{1/q}(\alpha
,q)+\varepsilon _{2}^{1/p}(1-\alpha ,\lambda ,p)D_{s}^{1/q}(\alpha ,q)\right]
, & \alpha \lambda \leq 1-\lambda \left( 1-\alpha \right) \leq 1-\alpha \\ 
\left[ \varepsilon _{2}^{1/p}(\alpha ,\lambda ,p)C_{s}^{1/q}(\alpha
,q)+\varepsilon _{1}^{1/p}(1-\alpha ,\lambda ,p)D_{s}^{1/q}(\alpha ,q)\right]
, & 1-\alpha \leq \alpha \lambda \leq 1-\lambda \left( 1-\alpha \right)%
\end{array}%
\right. ,
\end{equation*}%
where 
\begin{equation*}
C_{s}(\alpha ,q)=\left( 1-\alpha \right) \left[ A_{\alpha }^{sq}\left(
a,b\right) +a^{sq}\right] ,\ D_{s}(\alpha ,q)=\alpha \left[ A_{\alpha
}^{sq}\left( a,b\right) +b^{sq}\right] ,
\end{equation*}%
and $\varepsilon _{1}$ and$\ \varepsilon _{2}$ numbers are defined as in (%
\ref{2-12b}).
\end{proposition}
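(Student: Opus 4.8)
The plan is to specialize Theorem \ref{2.3} to the function $f(t)=t^{s+1}$ on $[a,b]$ and to recognize the three ingredients of $I_{f}(\lambda,\alpha,a,b)$ as the weighted arithmetic mean $A_{\alpha}$, its power, and the $p$-logarithmic mean $L_{s+1}$. First I would record $f'(t)=(s+1)t^{s}$ and hence $\left\vert f'(t)\right\vert ^{q}=(s+1)^{q}t^{sq}$, so that every structural quantity occurring in Theorem \ref{2.3} can be written in closed form.

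The one point requiring genuine care is verifying that $\left\vert f'\right\vert ^{q}$ is $s$-convex on $[a,b]$, as Theorem \ref{2.3} demands. By the Example of \cite{HM94} quoted above, for any exponent $r\in(0,1)$ the map $t\mapsto t^{r}$ belongs to $K_{r}^{2}$, i.e.\ it is $r$-convex in the second sense. Taking $r=sq$, the hypothesis $s\in\left(0,\tfrac{1}{q}\right)$ guarantees $sq\in(0,1)$, so $t^{sq}$ is $sq$-convex. Since $q>1$ forces $s\le sq$, and for a nonnegative function being $s$-convex is implied by being $sq$-convex (because $\alpha^{s}\ge\alpha^{sq}$ for $\alpha\in[0,1]$ makes the $s$-convex bound the larger one), it follows that $t^{sq}$ is also $s$-convex; multiplying by the positive constant $(s+1)^{q}$ preserves this. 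Thus $\left\vert f'\right\vert ^{q}$ is $s$-convex and Theorem \ref{2.3} applies.

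Next I would evaluate the left-hand side. Directly from the definitions of the means one has $\alpha f(a)+(1-\alpha)f(b)=A_{\alpha}(a^{s+1},b^{s+1})$, $f(\alpha a+(1-\alpha)b)=A_{\alpha}^{s+1}(a,b)$, and $\tfrac{1}{b-a}\int_{a}^{b}t^{s+1}\,dt=\tfrac{b^{s+2}-a^{s+2}}{(s+2)(b-a)}=L_{s+1}^{s+1}(a,b)$, so that $I_{f}(\lambda,\alpha,a,b)$ becomes exactly the expression on the left of the Proposition. For the right-hand side I would substitute $\left\vert f'(x)\right\vert ^{q}=(s+1)^{q}x^{sq}$ into $C_{f}$ and $D_{f}$ of \eqref{2-12a}; using $(1-\alpha)b+\alpha a=A_{\alpha}(a,b)$ this yields $C_{f}(\alpha,q)=(s+1)^{q}C_{s}(\alpha,q)$ and $D_{f}(\alpha,q)=(s+1)^{q}D_{s}(\alpha,q)$, whence $C_{f}^{1/q}=(s+1)C_{s}^{1/q}$ and $D_{f}^{1/q}=(s+1)D_{s}^{1/q}$.

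Finally, pulling the common factor $(s+1)$ out of every bracket in \eqref{2-12} and combining it with the prefactor $\left(\tfrac{1}{s+1}\right)^{1/q}$ produces $\left(\tfrac{1}{s+1}\right)^{1/q}(s+1)=(s+1)^{1-1/q}$, which is precisely the constant in the claimed inequality; the three case distinctions on $\alpha,\lambda$ carry over verbatim from Theorem \ref{2.3}. The only real obstacle is the $s$-convexity verification of the second paragraph — everything after that is bookkeeping that matches the general estimate term by term against the definitions of $A_{\alpha}$ and $L_{p}$.
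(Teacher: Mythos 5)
Your proposal is correct and takes essentially the same route as the paper: apply Theorem \ref{2.3} (inequality (\ref{2-12})) to $f(t)=t^{s+1}$, identify $I_{f}(\lambda,\alpha,a,b)$ with the combination of $A_{\alpha}$ and $L_{s+1}^{s+1}$, and absorb the factor $(s+1)$ from $C_{f}^{1/q}=(s+1)C_{s}^{1/q}$, $D_{f}^{1/q}=(s+1)D_{s}^{1/q}$ into $\left(\tfrac{1}{s+1}\right)^{1/q}$ to get $(s+1)^{1-1/q}$. Your second paragraph actually supplies a detail the paper only asserts: the paper says $(s+1)^{q}t^{qs}$ is $s$-convex ``since $qs\in(0,1)$,'' whereas the Hudzik--Maligranda example only gives $qs$-convexity directly, and your observation that $qs$-convexity implies $s$-convexity for nonnegative functions (because $\alpha^{qs}\leq\alpha^{s}$ on $[0,1]$ when $s\leq qs$) is exactly the missing bridge.
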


\begin{proof}
The assertion follows from applied the inequality (\ref{2-12}) to the
function $f(t)=t^{s+1}$,$\ t\in \left[ a,b\right] $ and $s\in \left( 0,\frac{%
1}{q}\right) ,$ which implies that $f^{\prime }(t)=(s+1)t^{s}$,$\ \ t\in %
\left[ a,b\right] $ and $\left\vert f^{\prime }(t)\right\vert
^{q}=(s+1)^{q}t^{qs}$,$\ \ t\in \left[ a,b\right] $ is a $s$-convex function
in the second sense since $qs\in \left( 0,1\right) $ and $(s+1)^{q}>0.$
\end{proof}

\end{document}